\newtheorem{theorem}{Theorem}%[section]
\newtheorem{lemma}[theorem]{Lemma}
\newtheorem{proposition}[theorem]{Proposition}
\newtheorem{corollary}[theorem]{Corollary}
\newtheorem{remark}[theorem]{Remark}
\newtheorem{question}[theorem]{Question}
\newtheorem{example}[theorem]{Example}
\newtheorem{definition}[theorem]{Definition}
\newtheorem*{claim}{Claim}
\newtheorem*{nono-theorem}{Theorem}
\newcommand{\F}{\mathcal{F}}
\newcommand{\G}{\mathcal{G}}
\newcommand{\GM}{\mathcal{G}^{MAX}}
\newcommand{\Sh}{\mathcal{S}}
\newcommand{\FM}{\mathcal{F}^{MAX}}
\newcommand{\N}{\mathbb{N}}
\newcommand{\supp}{\mathrm{supp\, }}
\DeclareMathOperator{\fin}{[\omega]^{<\omega}}
\DeclareMathOperator{\rk}{rk}
\begin{document}
\title{Banach-Stone-like results for combinatorial Banach spaces}

\author{C. Brech}
\address{Departamento de Matem\'atica, Instituto de Matem\'atica e Estat\'\i stica, Universidade de S\~ao Paulo, Rua do Mat\~ao, 1010 - CEP 05508-090 - S\~ao Paulo - SP - Brazil.}
\email{brech@ime.usp.br}

\author{C. Pi\~{n}a}
\address{Departamento de Matem\'aticas, Facultad de Ciencias, Universidad de los Andes, C.P. 5101,  Mérida- Venezuela.\newline
\indent Escuela de Matem\'aticas, Universidad Industrial de Santander, C.P. 680001, Bucaramanga - Colombia.}
\email{cpinarangel@gmail.com}

\thanks{The first author was supported by CNPq grant (308183/2018-5) and by FAPESP grant (2016/25574-8). The second author was supported by the postdoctoral program of Vicerrector\'{i}a de Investigaci\'{o}n y Extensi\'{o}n de la UIS}

\begin{abstract}
We show that under a certain topological assumption on two compact hereditary families $\F$ and $\G$ on some infinite cardinal $\kappa$, the corresponding combinatorial spaces $X_\F$ and $X_\G$ are isometric if and only if there is a permutation of $\kappa$ inducing a homeomorphism between $\F$ and $\G$. We also prove that two different regular families $\F$ and $\G$ on $\omega$ cannot be permuted one to the other. Both these results strengthen the main result of \cite{BrechFerencziTcaciuc}.    
\end{abstract}

\subjclass{Primary 46B45; Secondary 03E75}
\keywords{Banach-Stone Theorem, combinatorial Banach spaces, compact and spreading families.}

\maketitle
\date{\today}

\baselineskip 18pt

The classical Banach-Stone theorem states that, given two compact spaces $K$ and $L$, $C(K)$ and $C(L)$ are isometric Banach spaces if and only if $K$ and $L$ are homeomorphic (see e.g. \cite[Theorem 7.8.4]{Semadeni}). Our main purpose in this paper is to prove versions of this result in the context of combinatorial Banach spaces.

Given a family $\F$ of finite subsets of an infinite cardinal $\kappa$, it can be seen as a topological subspace of $2^\kappa$. If $\F$ contains all singletons of $\kappa$ and is closed under subsets, the combinatorial Banach space $X_\F$ (also called the generalized Schreier space) is the completion of $c_{00}(\kappa)$, the vector space of finitely supported scalar sequences, with respect to the norm:
$$
\Vert x\Vert=\sup\left\{\sum_{\alpha\in s}|x(\alpha)|: s\in\F\right\}.
$$
Notice that the sequence of unit vectors $(e_\alpha)$ forms a (long) Schauder basis of $X_\F$, which is considered to be the canonical one and is $1$-unconditional. Moreover, if $\F$ is compact, the canonical basis is shrinking. 

Both spaces $X_\F$ and $C(\F)$ can be seen as Banach spaces counterparts of the family $\F$ and they share some properties: for instance, both contain homeomorphic copies of $\F$ in the dual unit sphere equiped with the weak$^*$ topology. Banach-Stone theorem guarantees that the isometric structure of $C(\F)$ carries exactly the topological structure of $\F$. In Section 2 we prove our first main result, Theorem \ref{perm}, and give in Corollary \ref{coroperm} sufficient topological assumptions on two families $\F$ and $\G$, so that $X_\F$ and $X_\G$ are isometric if and only if $\F$ and $\G$ are $\pi$-homeomorphic (that is, there is a homeomorphism between $\F$ and $\G$ which is induced by a permutation of $\kappa$, see Section 1). Hence, the isometric structure of $X_\F$ carries more than the topological structure of $\F$.

Isometries between combinatorial spaces have been recently analysed in \cite{AntunesBeanlandVietChu, BrechFerencziTcaciuc}. In \cite[Theorem 5.1]{AntunesBeanlandVietChu}, the authors prove that any isometry of the real combinatorial space of a Schreier family of finite order is determined by a change of signs of the elements of the canonical basis. This has been generalized in \cite{BrechFerencziTcaciuc} to real and complex combinatorial spaces of regular families on the set of the integers $\omega$ (see Definition \ref{defFamiliesN}). Our proof follows similar lines as theirs, but we extracted a sufficient topological condition which made it possible to enlarge the class of families for which any isometry between two combinatorial spaces is given by a signed permutation of the canonical bases: it includes now families on $\omega$ which are not spreading, as well as families on uncountable cardinals $\kappa$. Moreover, Arens and Kelley \cite{ArensKelley} gave a simple proof of the Banach-Stone theorem using the fact that isometries take extreme points to extreme points. It was improved to a noncommutative version for C$^*$-algebras by Kadison \cite{Kadison}. Our proof also uses this fact, as does that of \cite{BrechFerencziTcaciuc}.

Theorem \ref{perm} states that any isometry between $X_\F$ and $X_\G$ is induced by a permutation of $\kappa$ and a sequence of signs for a class of families. If we take $\F$ to be the family of singletons of $\omega$, we get that $X_\F = c$, the Banach space of convergent sequences of scalars, with the supremum norm. Taking $\F$ to be the family of all finite subsets of $\kappa$ makes $X_\F$ to be $\ell_1(\kappa)$. Hence, similarly to what is mentioned in \cite{BrechFerencziTcaciuc}, our results can be compared to the fact that isometries of the spaces $c_0$ or $\ell_p$, $1 \leq p < \infty$, $p\neq 2$, are all signed permutations of the canonical unit basis (see e.g. 
\cite[Theorem 2.f.14]{LindenstraussTzafriri}).

Our second main result, Theorem \ref{uniqueness}, states that two different regular families on $\omega$ are not $\pi$-homeomorphic. A permutation of $\omega$ which takes $\F$ onto $\G$ and a sequence of signs induce an isometry between $X_\F$ and $X_\G$. \cite[Theorem 10]{BrechFerencziTcaciuc} shows that the converse is also true for regular families $\F$ and $\G$: any isometry between $X_\F$ and $X_\G$ is induced by a permutation of $\omega$ and a sequence of signs. Our Theorem \ref{uniqueness} shows that under any of these two equivalent assumptions - $\G$ being the image of $\F$ under a permutation of $\omega$ or $X_\F$ and $X_\G$ being isometric - $\F$ and $\G$ are just the same family. Meaning that these morphisms preserve not only the structure of the family, but the family itself. 

The crucial difference between the hypotheses of theorems \ref{perm} and \ref{uniqueness} is the assumption that the families are spreading (see Definition \ref{defFamiliesN}). A compact family can only happen to be spreading for families on $\omega$. The properties of being compact or hereditary are preserved under permutations of $\kappa$, as a permutation induces a homeomorphism between a family and its image. However, this is not the case for spreading. This lead us to suspect that being spreading is a strong assumption for the classification of the isometries between combinatorial spaces obtained in \cite{BrechFerencziTcaciuc} and this is indeed the case, as guarantees Theorem \ref{uniqueness}.

Another implication of a family being spreading is that the Cantor-Bendixson index of the singletons $\{n\}$ is a monotone nondecreasing sequence of ordinals. This fact is explored in more detail in the last section, where we also give further examples and remarks related to both the countable and the general setting. 

The paper is organized as follows. In Section 1 we introduce definitions, notations and basic facts to be used in the following ones. In Section 2 we present Theorem \ref{perm}, which characterizes the isometries on combinatorial spaces of families in a broader class than the class of regular families, including families on uncountable cardinals. In Section 3, we show that two different regular families cannot be permuted one to the other: this is Theorem \ref{uniqueness} and its proof is purely combinatorial.  We finish the paper exploring more about the topology and giving examples and remarks presented in Section 4.

\section{Preliminaries}

Given an infinite cardinal $\kappa$, we will denote by $[\kappa]^{<\omega}$ the set of all finite subsets of $\kappa$. Given $n\in\omega$ and $M\subseteq \kappa$, we denote by $[M]^n$ the collection of all subsets of $M$ of size $n$, by $[M]^{<n}$ the collection of all subsets of $M$ of size less than $n$, and by $[M]^{\leq n}$ the union $[M]^n\cup [M]^{<n}$. If $s \in [\kappa]^n$, we write $s = \{\alpha_1, \dots, \alpha_n\}_<$ to indicate that the enumeration is strictly increasing.

We introduce now some notions about families:

\begin{definition}\label{defFamilies}
A \emph{family} $\F$ on $\kappa$ is a set $\F\subseteq [\kappa]^{<\omega}$ and we will always assume that $[\kappa]^{\leq 1} \subseteq \F$. 
\begin{itemize}
    \item We say that a family $\F$ is \emph{hereditary} if it is closed under subsets: $s\in\F$ whenever $t\in\F$ and $s\subseteq t$. 
    \item A family $\F$ on $\kappa$ is a topological space when endowed with the subspace topology inherited from $2^\kappa$, where each $s \in \F$ is identified with its characteristic function and $2^\kappa$ has the product topology. 
    \item Any topological assumption about $\F$ refers to this topology. For instance, $\F$ is \emph{compact} if it is a closed subspace of $2^\kappa$.
\end{itemize}
\end{definition}

Given a family $\F$, let $\FM$ denote the set of maximal elements of $\F$ with respect to the inclusion and $\overline{\F}^\subseteq$ denote its downwards closure, that is:
$$\FM = \{s \in \F: \nexists t \in \F \text{ such that }s \subsetneq t\} \qquad \text{and}\qquad \overline{\F}^\subseteq=\allowbreak\{s: \exists t \in \F \text{ such that } s\subseteq t\}.$$ 

A permutation of $\kappa$ is simply a bijection $\pi:\kappa\longrightarrow\kappa$ and if $s \subseteq \kappa$, then $\pi[s] = \{\pi(\alpha): \alpha \in s\}$. Any permutation $\pi$ of $\kappa$ induces a homeomorphism $\hat{\pi}: 2^\kappa \longrightarrow 2^\kappa$, defined by $\hat{\pi}(x)(\alpha) = x(\pi^{-1}(\alpha))$. We chose to use here $\pi^{-1}$ instead of $\pi$ so that we have the following: if $\pi[\F]$ denotes the image of $\F$ by $\hat{\pi}$, then it coincides with $\{\pi[s]: s \in \F\}$. We say that two families $\F$ and $\G$ on $\kappa$ are $\pi$-homeomorphic if there is a permutation $\pi$ of $\kappa$ such that $\G = \pi[\F]$. 

By an isometry between Banach spaces we mean a linear bijective operator which preserves the norm. If $X$ is a Banach space, $X^*$ denotes its topological dual, $B_X$ is the unit ball and $B_{X^*}$ is the dual unit ball. 

Throughout the paper we will be concerned about conditions on two families $\F$ and $\G$ which guarantee that all or some of the following statements are equivalent and examples to show when they are not:
\begin{enumerate}[(i)]
    \item $\F = \G$.
    \item $\F$ and $\G$ are $\pi$-homeomorphic.
    \item $X_\F$ and $X_\G$ are isometric.
    \item $X_\F^*$ and $X_\G^*$ are isometric.
\end{enumerate}

Notice that these statements are decreasing in strength. (i) clearly implies (ii) and if $\pi$ is a permutation of $\kappa$ witnessing (ii), then, given any sequence $(\theta_\alpha)$ of scalars such that $|\theta_\alpha| = 1$, there is an isometry $T: X_\F \rightarrow X_\G$ satisfying $T(e_\alpha)=\theta_\alpha e_{\pi(\alpha)}$ for all $\alpha \in \kappa$, so that (ii) implies (iii). (iii) implies (iv) easily, as the adjoint operator of any isometry between two Banach spaces is an isometry between their dual spaces. Moreover, in case $T$ is such that $T(e_\alpha) = \theta_\alpha e_{\pi(\alpha)}$ for every $\alpha \in \kappa$, some sequence of scalars $(\theta_\alpha)$ and a permutation $\pi$, then clearly $T^*(e_\alpha^*) = \theta_\alpha e_{\pi^{-1}(\alpha)}^*$ for all $\alpha \in \kappa$. 

Also, we will be mostly working with hereditary families, so that conditions (i) and (ii) are clearly equivalent to conditions (i') and (ii') below, and we will use them interchangeably:
\begin{enumerate}[(i')]
    \item $\FM = \GM$.
    \item $\FM$ and $\GM$ are $\pi$-homeomorphic.
\end{enumerate}

\section{Isometries between combinatorial spaces}

The purpose of this section is to (essentially) improve  the results of \cite{BrechFerencziTcaciuc} by getting that two families are $\pi$-homeomorphic from weaker assumptions on them. Our first purpose was to replace the spreading condition by some condition which does not take the order into account. This lead us to a much more general version, for compact and hereditary families, on any infinite cardinal $\kappa$, satisfying some topological condition. The existence of nontrivial compact and hereditary families on uncountable cardinals was already known in \cite{LopezTodorcevicLargeFam}, and much more complex families were explored in \cite{BrechLopezTodorcevic}. We recall some of these examples and discuss these briefly in Section 4.

Our proof follows the path of the proof of Theorem 10 of \cite{BrechFerencziTcaciuc}. In particular, we will also make use of extreme points in our proof. Given a Banach space $X$, recall that $x$ is an extreme point of a convex set $B \subseteq X$ if there are no $x_1 \neq x_2$ in $B$ such that $x = \alpha x_1 + (1-\alpha)x_2$ for some $0<\alpha <1$. Let $Ext(B_{X^*})$ be the set of extreme points of $B_{X^*}$. 

We will use the characterization of the extreme points of $B_{X_\F^*}$ stated in \cite{GowersBlog}, and proved and used in \cite{AntunesBeanlandVietChu} and  \cite{BrechFerencziTcaciuc} in more particular cases. Actually, despite being stated for regular families in \cite[Proposition 5]{BrechFerencziTcaciuc}, a careful analysis of its proof shows that the characterization holds for any compact and hereditary family $\F$. For completeness purposes, we reproduce their proof here with the required adaptations. 

Let $\mathbb{K}$ be the field of scalars $\mathbb{R}$ or $\mathbb{C}$. Given a Banach space $X$, we say that a subset $N \subseteq B_{X^*}$ is {\em norming} if $\Vert x \Vert = \sup \{|x^*(x)|:x^* \in N\}$ for every $x \in X$, and $N$ is {\em sign invariant} if for any sign $\theta\in\mathbb{K}$, we have $\theta N=N$.
The following classical lemma was stated as Lemma 4 of \cite{BrechFerencziTcaciuc}: 

\begin{lemma} \label{norming}
Let $X$ be a Banach space over $\mathbb{K}$, and let $N\subseteq B_{X^*}$ be a sign invariant norming set for $X$. Then $B_{X^*}=\overline{conv(N)}^{w*}$.
\end{lemma}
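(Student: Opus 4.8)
The plan is to prove the two inclusions separately, with the nontrivial one handled by a weak$^*$ separation argument. The inclusion $\overline{conv(N)}^{w*}\subseteq B_{X^*}$ is immediate: by the Banach--Alaoglu theorem $B_{X^*}$ is weak$^*$-compact, hence weak$^*$-closed, and it is convex; since $N\subseteq B_{X^*}$, it therefore contains the weak$^*$-closed convex hull of $N$.

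For the reverse inclusion I would argue by contradiction. Write $K=\overline{conv(N)}^{w*}$ and suppose there is some $x_0^*\in B_{X^*}\setminus K$. Since $K$ is a weak$^*$-closed convex set and the topological dual of $(X^*,w^*)$ is exactly $X$ (the weak$^*$-continuous functionals are precisely the evaluations at points of $X$), the Hahn--Banach separation theorem provides $x\in X$ and $\gamma\in\mathbb{R}$ with $\sup_{y^*\in K}\mathrm{Re}\,y^*(x)\le\gamma<\mathrm{Re}\,x_0^*(x)$.

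The key step is to convert the real part into an absolute value on $N$, and this is exactly where sign invariance enters. Fix $x^*\in N$; for every sign $\theta$ we have $\theta x^*\in N\subseteq K$, so $\mathrm{Re}(\theta x^*(x))\le\gamma$. Choosing $\theta$ with $|\theta|=1$ so that $\theta x^*(x)=|x^*(x)|$ yields $|x^*(x)|\le\gamma$. Taking the supremum over $x^*\in N$ and invoking the norming property gives $\Vert x\Vert=\sup_{x^*\in N}|x^*(x)|\le\gamma$. On the other hand $x_0^*\in B_{X^*}$ gives $\mathrm{Re}\,x_0^*(x)\le|x_0^*(x)|\le\Vert x_0^*\Vert\,\Vert x\Vert\le\Vert x\Vert\le\gamma$, contradicting $\gamma<\mathrm{Re}\,x_0^*(x)$. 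Hence no such $x_0^*$ exists, so $B_{X^*}\subseteq K$ and the two inclusions give equality.

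I expect the only genuinely delicate point to be the passage from real parts to absolute values in the complex case, which is what forces the hypothesis of sign invariance; once that reduction is in place the argument is the standard bipolar/separation scheme. One should also be careful to invoke the separation theorem in the correct duality, namely $(X^*,w^*)^*=X$ rather than the norm dual, so that the separating functional is realized by an honest element $x\in X$ against which the norming property can be applied.
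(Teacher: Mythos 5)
Your proof is correct. The paper itself gives no proof of Lemma \ref{norming} --- it is quoted as a classical fact, namely Lemma 4 of \cite{BrechFerencziTcaciuc} --- and your argument (weak$^*$ Hahn--Banach separation against an element of $X$, with sign invariance used to upgrade real parts to absolute values and the norming property to reach the contradiction) is precisely the standard proof of that cited result.
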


To prove the characterization of the extreme points in our more general setting, we replace weak$^*$ convergence of sequences by the following auxiliary lemma:

\begin{lemma}\label{aux2}
Let $\F$ be a compact and hereditary family on $\kappa$. Then 
$$\{\sum_{\xi \in s} \theta_\xi e_\xi^*: s \in \F \ \text{ and }(\theta_\xi)_{\xi \in s} \text{ is a sequence of scalars with }|\theta_\xi|=1\}$$
is weak$^*$-closed.
\end{lemma}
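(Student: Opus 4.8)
The plan is to prove directly that the weak$^*$-closure of the set adds no new points; denote that set by $A$. Since $X_\F^*$ need not be weak$^*$-metrizable when $\kappa$ is uncountable, I would argue with nets rather than sequences, using that a functional lies in the weak$^*$-closure of $A$ precisely when it is a weak$^*$-limit of some net drawn from $A$. So I would fix a net $(x^*_i)$ in $A$ with $x^*_i \to x^*$ in the weak$^*$ topology, and write $x^*_i = \sum_{\xi \in s_i}\theta^i_\xi e_\xi^*$ with $s_i \in \F$ and $|\theta^i_\xi| = 1$; here $s_i$ is just the set of coordinates on which $x^*_i$ does not vanish.

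The crucial step is to produce a single finite set $s \in \F$ that will turn out to be the support of $x^*$, and this is exactly where compactness of $\F$ is used. Viewing each $s_i$ as its characteristic function in $2^\kappa$, the net $(s_i)$ lives in the compact space $\F$, so it admits a subnet converging to some $s \in \F$. Passing to this subnet does not change the weak$^*$-limit $x^*$, so I may assume $s_i \to s$ in $2^\kappa$, that is, coordinatewise. The point is that $s$ is then a genuine finite member of $\F$: this is what prevents the supports from spreading out and forcing an infinitely supported limit. Indeed, the hypothesis cannot be dropped, since for $\F = [\omega]^{<\omega}$ one has $X_\F = \ell_1$ and $A$ fails to be weak$^*$-closed (the net $\sum_{\alpha \le n} e_\alpha^*$ converges weak$^*$ to the constant functional $\mathbf 1 \notin A$); note that $[\omega]^{<\omega}$ is not compact.

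I would then read off $x^*$ coordinatewise, using that weak$^*$ convergence gives $x^*_i(e_\alpha) \to x^*(e_\alpha)$ for each $\alpha \in \kappa$. For $\alpha \in s$ we have $\alpha \in s_i$ eventually, so $x^*_i(e_\alpha) = \theta^i_\alpha$ is eventually a scalar of modulus $1$ converging to $x^*(e_\alpha)$; continuity of the modulus then forces $|x^*(e_\alpha)| = 1$, and I set $\theta_\alpha := x^*(e_\alpha)$. For $\alpha \notin s$ we have $\alpha \notin s_i$ eventually, so $x^*_i(e_\alpha) = 0$ and hence $x^*(e_\alpha) = 0$. Thus $x^*$ and $\sum_{\alpha \in s}\theta_\alpha e_\alpha^*$ agree on every basis vector $e_\alpha$.

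Finally, since $\{e_\alpha : \alpha \in \kappa\}$ spans the dense subspace $c_{00}(\kappa)$ of $X_\F$, two continuous functionals agreeing on all $e_\alpha$ must coincide; hence $x^* = \sum_{\alpha \in s}\theta_\alpha e_\alpha^*$. As $s \in \F$ is finite and each $|\theta_\alpha| = 1$, this exhibits $x^*$ as an element of $A$, so $A$ is weak$^*$-closed. The only genuinely delicate point is the compactness extraction in the second step, which is what pins the limit support to a finite set of $\F$; the remaining coordinatewise bookkeeping and the density argument are routine.
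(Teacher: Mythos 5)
Your proof is correct, and it takes a genuinely different route from the paper's. The paper works directly with a point $x^*$ in the weak$^*$-closure of the set (which it calls $M$): weak$^*$-continuity of $y^* \mapsto |y^*(e_\xi)|$ gives $|x^*(e_\xi)| \in \{0,1\}$ for every $\xi$, one sets $s = \supp x^*$, and then --- this is where hereditariness enters --- every finite $t \subseteq s$ is shown to lie in $\F$ by picking $y_t^* \in M$ agreeing with $x^*$ within $\frac{1}{2}$ on the coordinates of $t$, so that $t \subseteq \supp y_t^* \in \F$ and hence $t \in \F$; compactness appears only at the very end, to force $s$ to be finite (an infinite $s$ with $[s]^{<\omega} \subseteq \F$ would produce, as a limit of increasing finite subsets, an infinite element of the closed set $\F$ inside $2^\kappa$). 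You instead put compactness up front: the supports $s_i$ form a net in the compact space $\F$, a subnet converges coordinatewise to some $s \in \F$ (automatically finite), and the rest is coordinatewise bookkeeping plus density of $c_{00}(\kappa)$; all the steps there (eventual agreement of the $\{0,1\}$-valued coordinates, continuity of the modulus, agreement on basis vectors implying equality of functionals) are sound. A notable consequence of your argument is that it never invokes the hereditary hypothesis, so it actually proves the lemma for an \emph{arbitrary} compact family, whereas the paper's proof genuinely uses hereditariness; what the paper's route buys in exchange is that it avoids nets and subnets altogether, extracting the support intrinsically from $x^*$ by finite approximation. Your closing remark about $\F = [\omega]^{<\omega}$ and $\ell_1$ is also correct and aptly shows that compactness cannot be dropped.
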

\begin{proof}
Let
$$M = \{\sum_{\xi \in s} \theta_\xi e_\xi^*: s \in \F \ \text{ and }(\theta_\xi)_{\xi \in s} \text{ is a sequence of scalars with }|\theta_\xi|=1\}$$
and let $x^* \in \overline{M}^{w^*}$. For each $\xi \in \kappa$, notice that 
$$|x^*(e_\xi)|\in \overline{\{|y^*(e_\xi)|: y^* \in M)\}} = \{0,1\}.$$
Hence, there is $s \subseteq \kappa$ and a sequence $(\theta_\xi)_{\xi \in s}$ in $\mathbb{K}$ with $|\theta_\xi|=1$ such that $x^* = \sum_{\xi \in s} \theta_\xi e_\xi^*$. It remains to prove that $s \in \F$. 

Let us show that $[s]^{< \omega} \subseteq \F$. For each finite $t \subseteq s$, there is $y_t^* \in M$ such that $|x^*(e_\xi) - y_t^*(e_\xi)|<\frac{1}{2}$ for every $\xi \in t$. In particular, this implies that $|y_t^*(e_\xi)|=1$ for every $\xi \in t$, so that $t \subseteq \supp y_t^* \in \F$. Since $\F$ is hereditary, we get that $t \in \F$.

Finally, $s$ has to be finite since $\F$ is compact. In particular, $s \in \F$.
\end{proof}

We are now ready to reproduce the proof given in \cite[Proposition 5]{BrechFerencziTcaciuc} of the following characterization:

\begin{proposition}\label{extremepoints}
If $\F$ is a compact and hereditary family on an infinite cardinal $\kappa$, then
$$
Ext(B_{X_{\F}^{*}})=\left\{\sum_{\alpha\in s}\theta_\alpha e^*_\alpha : s\in\FM \text{ and } (\theta_\alpha)_{\alpha \in s} \text{ is a sequence of scalars with } |\theta_\alpha|=1\right\}.
$$
\end{proposition}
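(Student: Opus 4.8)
The plan is to prove the two inclusions separately, using Lemma~\ref{norming} and Lemma~\ref{aux2} to control the structure of $B_{X_\F^*}$, and then a direct extremality argument to pin down exactly which functionals are extreme points.

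First I would set up the norming set. Let $M$ denote the set appearing in Lemma~\ref{aux2}, namely all functionals of the form $\sum_{\xi\in s}\theta_\xi e_\xi^*$ with $s\in\F$ and $|\theta_\xi|=1$. By the very definition of the norm on $X_\F$, this set $M$ is norming: for $x\in c_{00}(\kappa)$ the supremum defining $\|x\|$ is attained (or approximated) by choosing $s\in\F$ and signs $\theta_\xi$ aligning with the phases of $x(\xi)$, and $M$ is clearly sign invariant. Since $\F$ is hereditary, every element of $M$ supported on $s\in\F$ restricts, after enlarging to a maximal set, to something comparable; in particular I would note that by heredity every functional in $M$ is dominated coordinatewise by one supported on a maximal set $s\in\FM$. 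Combining Lemma~\ref{norming} with Lemma~\ref{aux2}, which says $M=\overline{M}^{w^*}$ is already weak$^*$-closed, gives $B_{X_\F^*}=\overline{\operatorname{conv}(M)}^{w^*}$ with $M$ weak$^*$-compact.

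Next, for the inclusion $\supseteq$, I would show that each $x^*=\sum_{\alpha\in s}\theta_\alpha e_\alpha^*$ with $s\in\FM$ is genuinely extreme. Suppose $x^*=\tfrac12(y^*+z^*)$ with $y^*,z^*\in B_{X_\F^*}$. Evaluating at $e_\alpha$ for $\alpha\in s$ forces $y^*(e_\alpha)=z^*(e_\alpha)=\theta_\alpha$, since $|x^*(e_\alpha)|=1$ is the extreme modulus allowed (here I use that $\|y^*\|,\|z^*\|\le 1$ and each coordinate functional $e_\alpha^*$ has norm one because $\{\alpha\}\in\F$). The maximality of $s$ then rules out any nonzero coordinate outside $s$: if $y^*$ had a coordinate at some $\beta\notin s$, one could build a test vector on $s\cup\{\beta\}$ forcing $\|y^*\|>1$, using that $s\cup\{\beta\}\notin\F$ is where the norm constraint bites. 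This yields $y^*=z^*=x^*$.

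For the reverse inclusion $\subseteq$, I would argue that any extreme point of $B_{X_\F^*}$ must lie in the displayed set. Since $B_{X_\F^*}=\overline{\operatorname{conv}(M)}^{w^*}$ with $M$ weak$^*$-compact, Milman's theorem gives $\operatorname{Ext}(B_{X_\F^*})\subseteq M$. So an extreme point has the form $\sum_{\xi\in s}\theta_\xi e_\xi^*$ with $s\in\F$; it remains to see $s$ must be maximal. If $s\in\F\setminus\FM$, pick $t\in\F$ with $s\subsetneq t$ and $\beta\in t\setminus s$; then both $x^*+e_\beta^*$ and $x^*-e_\beta^*$ lie in $M\subseteq B_{X_\F^*}$ (their supports $s\cup\{\beta\}\subseteq t\in\F$ have all unimodular coefficients), and $x^*$ is their midpoint, contradicting extremality. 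The main obstacle I anticipate is handling the complex scalar field cleanly in the $\supseteq$ direction: over $\mathbb{C}$ one cannot split into two real directions, so I would phrase the extremality test through the observation that a point of modulus one in the closed unit disc of a coordinate is extreme, and propagate this coordinatewise while keeping the support constraint $s\in\FM$ rigidly enforced by maximality. I would also double-check that Milman's theorem applies, which only requires the weak$^*$-compactness of $M$ supplied by Lemma~\ref{aux2} together with weak$^*$-compactness of $B_{X_\F^*}$ from Banach--Alaoglu.
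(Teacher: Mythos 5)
Your proposal is correct and follows essentially the same route as the paper: the same norming set $M$, Lemma \ref{norming} together with Lemma \ref{aux2} to get $B_{X_\F^*}=\overline{conv(M)}^{w^*}$ with $M$ weak$^*$-compact, Milman's theorem for the inclusion $Ext(B_{X_\F^*})\subseteq M$, and the same midpoint trick $x^*=\frac{1}{2}\left((x^*+e_\beta^*)+(x^*-e_\beta^*)\right)$ to rule out non-maximal supports. The only difference is that you spell out the reverse inclusion (that each unimodular functional supported on a maximal set is extreme) via coordinate-pinning and a test vector on $s\cup\{\beta\}$, a step the paper's proof asserts without detail; your argument for it is correct, including its treatment of complex scalars.
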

\begin{proof}
Take $N:=\{\sum_{\alpha\in s}\theta_\alpha e^*_\alpha : s\in \FM, |\theta_\alpha|=1\}$ and $M:=\{\sum_{\alpha\in s}\theta_\alpha e^*_\alpha : s\in \F, |\theta_\alpha|=1\}$. $M$ is clearly norming for $X_{\F}$ and sign invariant.  It follows from Lemma \ref{norming} that $B_{X_\F^*}=\overline{conv(M)}^{w^*}$. By Lemma \ref{aux2}, $M$ is $w^*$-closed, so that both $M$ and $B_{X_\F^*}=\overline{conv(M)}^{w*}$ are compact in the locally convex space $(X_\F^*,w^*)$. It follows from Milman's theorem (see \cite{Rudin}, Theorem 3.25) that every extreme point of $B_{X_\F^*}$ lies in $M$. Also, $(M \setminus N) \cap Ext(B_{X_\F^*}) = \emptyset$, as any $x \in M \setminus N$ can be written as $x = \frac{(x + e_\alpha^*) + (x - e_\alpha^*)}{2}$, where $\supp x \cup \{\alpha\} \in \F$. Since $N\subseteq Ext(B_{X_\F^*})$ we conclude that $Ext(B_{X_\F^*})=N$. 
\end{proof} 

We prove another auxiliary lemma before getting our improved version of \cite[Proposition 10]{BrechFerencziTcaciuc}

\begin{lemma}\label{aux1}
Let $\F$ be a compact and hereditary family on $\kappa$. Given $\alpha \in \kappa$, if
$\{\alpha\} \in \overline{\FM}$, then $e_\alpha^* \in \overline{Ext(B_{X_\F^*})}^{w^*}$.
\end{lemma}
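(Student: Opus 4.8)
The plan is to combine the explicit description of the extreme points from Proposition \ref{extremepoints} with an unwinding of the two closures that appear in the statement. First I would reinterpret the hypothesis $\{\alpha\} \in \overline{\FM}$ combinatorially. A basic clopen neighborhood of the characteristic function $\chi_{\{\alpha\}}$ in $2^\kappa$ is determined by a finite set $G \subseteq \kappa$ and consists of those $x$ that agree with $\chi_{\{\alpha\}}$ on $G$; assuming (without loss of generality) that $\alpha \in G$, the statement $\{\alpha\} \in \overline{\FM}$ therefore says precisely that for every finite $G \ni \alpha$ there exists $s \in \FM$ with $s \cap G = \{\alpha\}$.

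Next I would identify the weak$^*$ neighborhoods of $e_\alpha^*$ that we need to test. The point here is that all elements of $Ext(B_{X_\F^*})$ lie in $B_{X_\F^*}$ and hence have norm at most $1$, and $e_\alpha^*$ itself has norm $1$ (since $\{\alpha\} \in \F$). On such a norm-bounded set the weak$^*$ topology coincides with the topology of pointwise convergence on any dense subspace, and $c_{00}(\kappa) = \mathrm{span}\{e_\beta : \beta \in \kappa\}$ is dense in $X_\F$. Consequently, to show $e_\alpha^* \in \overline{Ext(B_{X_\F^*})}^{w^*}$ it suffices, given a finite $G \subseteq \kappa$ with $\alpha \in G$ and $\varepsilon > 0$, to produce an extreme point $y^*$ with $|y^*(e_\beta) - e_\alpha^*(e_\beta)| < \varepsilon$ for all $\beta \in G$; since $e_\alpha^*(e_\beta) = \delta_{\alpha\beta}$, this means $|y^*(e_\alpha) - 1| < \varepsilon$ and $|y^*(e_\beta)| < \varepsilon$ for $\beta \in G \setminus \{\alpha\}$.

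Finally I would glue the two reductions together. Applying the combinatorial reformulation to the given $G$ yields $s \in \FM$ with $s \cap G = \{\alpha\}$; taking all signs equal to $1$, the functional $y^* = \sum_{\gamma \in s} e_\gamma^*$ belongs to $Ext(B_{X_\F^*})$ by Proposition \ref{extremepoints}. Since $\alpha \in s$ and $s \cap (G \setminus \{\alpha\}) = \emptyset$, we get $y^*(e_\alpha) = 1$ and $y^*(e_\beta) = 0$ for every $\beta \in G \setminus \{\alpha\}$, so $y^*$ agrees exactly with $e_\alpha^*$ on $G$ and in particular lies in the chosen neighborhood. As $G$ and $\varepsilon$ were arbitrary, this shows $e_\alpha^* \in \overline{Ext(B_{X_\F^*})}^{w^*}$.

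The one genuinely technical point, and the step I expect to require the most care, is the reduction of the weak$^*$ closure to pointwise convergence on the dense subspace $c_{00}(\kappa)$: this rests on the uniform norm bound on the extreme points, and a short $\varepsilon/2$ estimate approximating an arbitrary $x \in X_\F$ by a finitely supported vector must be recorded to justify that testing against basis vectors suffices. Everything else is a routine translation between the topology of $2^\kappa$ and the combinatorics of $\FM$.
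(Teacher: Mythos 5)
Your proposal is correct and takes essentially the same approach as the paper: both arguments reduce the weak$^*$ approximation of $e_\alpha^*$ to testing against finitely supported vectors (you do this by citing the standard fact that on the bounded set $B_{X_\F^*}$ the weak$^*$ topology coincides with pointwise convergence on the dense subspace $c_{00}(\kappa)$, while the paper carries out the same reduction inline via an explicit $\varepsilon/2$ triangle-inequality estimate), and then both use $\{\alpha\} \in \overline{\FM}$ to find $s \in \FM$ meeting the relevant finite set only in $\alpha$ and take the extreme point $\sum_{\xi \in s} e_\xi^*$ supplied by Proposition \ref{extremepoints}. The only difference is whether the density-plus-boundedness step is quoted as a known fact or proved on the spot, which you yourself flag as the point requiring care.
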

\begin{proof}
Fix $\alpha \in \kappa$ such that $\{\alpha\} \in \overline{\FM}$ and let $\varepsilon >0$ and $x_1, \dots, x_n \in X_\F$. We have to show that there is $x^* \in Ext(B_{X_\F^*})$ such that $|e_\alpha^*(x_i) - x^*(x_i)| < \varepsilon$ for all $1 \leq i \leq n$.

Take $y_1, \dots, y_n \in c_{00}(\kappa)$ such that $\Vert y_i - x_i \Vert < \frac{\varepsilon}{2}$ for every $1 \leq i \leq n$ and take $F = \bigcup\{\supp y_i: 1 \leq i \leq n\}$. Since $F$ is finite and $\{\alpha\} \in \overline{\FM}$, there is $s \in \FM$ such that $\alpha \in s$ and $s \cap F \subseteq \{\alpha\}$. 

Let $x^*= \sum_{\xi \in s} e_\xi^*$ and notice that $x^* \in Ext(B_{X_\F^*})$ by Proposition \ref{extremepoints}. Also, for every $1 \leq i \leq n$, since $\supp y_i \cap s \subseteq \{\alpha\}$, we have that
$$|e_\alpha^*(y_i) - x^*(y_i)| = |\sum_{\xi \in s \setminus \{\alpha\}} e_\xi^*(y_i)| =0.$$
Finally, since $\Vert e_\alpha^*\Vert = 1$ and $\Vert x^* \Vert = 1$, we conclude that
$$\begin{array}{rl}
|e_\alpha^*(x_i) - x^*(x_i)| &
\leq |e_\alpha^*(x_i) - e_\alpha^*(y_i)| + |e_\alpha^*(y_i) - x^*(y_i)| + 
|x^*(y_i) - x^*(x_i)| \\
&\leq \Vert e_\alpha^*\Vert \cdot \Vert x_i - y_i\Vert  + \Vert x^*\Vert \cdot \Vert x_i - y_i\Vert < \varepsilon,
\end{array}$$
and this concludes the proof that $e_\alpha^* \in \overline{Ext(B_{X_\F^*})}^{w^*}$.
\end{proof}

\begin{proposition} \label{permprop}
Let $\F$ and $\G$ be compact and hereditary families on an infinite cardinal $\kappa$ with the additional property that $[\kappa]^1 \subseteq \overline{\FM}$.
If $T:X_{\F}^*\to X_{\G}^*$ is weak$^*$-weak$^*$ continuous and preserves extreme points, then for every $\alpha \in \kappa$ there is 
 $s_\alpha \in \G$ and a sequence $(\theta_\xi^\alpha)_{\xi \in s_\alpha}$ with $|\theta_\xi^\alpha|=1$ such that $T(e_\alpha^*) = \sum_{\xi \in s_\alpha} \theta_\xi^\alpha e_\xi^*$.
\end{proposition}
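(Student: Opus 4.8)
The plan is to realize $T(e_\alpha^*)$ as a weak$^*$-limit of extreme points of $B_{X_\G^*}$ and then to pin this limit down inside the weak$^*$-closed set provided by Lemma \ref{aux2}. The whole argument is a chaining of the three preceding results (Lemma \ref{aux1}, Proposition \ref{extremepoints}, Lemma \ref{aux2}) applied to $\F$ on the source side and to $\G$ on the target side.

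First I would unpack the hypothesis: $[\kappa]^1 \subseteq \overline{\FM}$ says exactly that $\{\alpha\} \in \overline{\FM}$ for every $\alpha \in \kappa$. Fixing $\alpha$, Lemma \ref{aux1} then yields $e_\alpha^* \in \overline{Ext(B_{X_\F^*})}^{w^*}$, so I may choose a net $(x_i^*)_i$ of extreme points of $B_{X_\F^*}$ converging weak$^*$ to $e_\alpha^*$.

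Next I would transport this net through $T$. Because $T$ preserves extreme points, every $T(x_i^*)$ lies in $Ext(B_{X_\G^*})$, and because $T$ is weak$^*$-weak$^*$ continuous, the net $(T(x_i^*))_i$ converges weak$^*$ to $T(e_\alpha^*)$. Hence $T(e_\alpha^*) \in \overline{Ext(B_{X_\G^*})}^{w^*}$. To identify this closure, I would apply the two descriptions for $\G$: by Proposition \ref{extremepoints}, $Ext(B_{X_\G^*})$ is the set of vectors $\sum_{\xi \in s}\theta_\xi e_\xi^*$ with $s \in \GM$ and $|\theta_\xi|=1$, so in particular it is contained in $M_\G := \{\sum_{\xi \in s}\theta_\xi e_\xi^* : s \in \G,\ |\theta_\xi|=1\}$, which is weak$^*$-closed by Lemma \ref{aux2}. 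Therefore $\overline{Ext(B_{X_\G^*})}^{w^*} \subseteq M_\G$, giving $T(e_\alpha^*) \in M_\G$, i.e. $T(e_\alpha^*) = \sum_{\xi \in s_\alpha}\theta_\xi^\alpha e_\xi^*$ for some $s_\alpha \in \G$ and signs $(\theta_\xi^\alpha)_{\xi \in s_\alpha}$, as required.

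There is no serious obstacle once the auxiliary lemmas are available; the one genuinely delicate point, which also explains the exact shape of the statement, is that a weak$^*$-limit of extreme points need not itself be an extreme point. This is precisely why the conclusion asserts only $s_\alpha \in \G$ rather than $s_\alpha \in \GM$, and why the correct receptacle for $T(e_\alpha^*)$ is the weak$^*$-closed enlargement $M_\G$ from Lemma \ref{aux2} rather than $Ext(B_{X_\G^*})$ itself. I would make sure to phrase the final inclusion so that the passage from $\overline{Ext(B_{X_\G^*})}^{w^*}$ to $M_\G$ is justified solely by the weak$^*$-closedness of $M_\G$, since that is the only nontrivial topological input on the target side.
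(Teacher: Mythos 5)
Your proof is correct and follows essentially the same route as the paper's: Lemma \ref{aux1} to put $e_\alpha^*$ in $\overline{Ext(B_{X_\F^*})}^{w^*}$, then weak$^*$-continuity and preservation of extreme points to place $T(e_\alpha^*)$ in $\overline{Ext(B_{X_\G^*})}^{w^*}$, and finally Proposition \ref{extremepoints} together with the weak$^*$-closedness from Lemma \ref{aux2} to identify the limit (your net formulation is just the pointwise version of the paper's inclusion $T[\overline{Ext(B_{X_\F^*})}^{w^*}] \subseteq \overline{T[Ext(B_{X_\F^*})]}^{w^*}$). Your closing remark correctly isolates why the conclusion lands in $\G$ rather than $\GM$, which the paper leaves implicit.
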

\begin{proof}
Fix $\alpha \in \kappa$ and since $\{\alpha\} \in \overline{\FM}$, then by Proposition \ref{extremepoints} and Lemma \ref{aux1},
$$e_\alpha^* \in \overline{Ext(B_{X_\F^*})}^{w^*} \subseteq \overline{B_{X_\F^*}}^{w^*} = B_{X_\F^*}.$$
Since $T$ is weak$^*$-weak$^*$-continuous and takes extreme points to extreme points, we get that:
$$T(e_\alpha^*) \in T[\overline{Ext(B_{X_\F^*})}^{w^*}] \subseteq \overline{T[Ext(B_{X_\F^*})]}^{w^*} \subseteq
\overline{Ext(B_{X_\G^*})}^{w^*}.$$
By Lemma \ref{aux2} for $\G$ we get that there is 
 $s_\alpha \in \G$ and a sequence $(\theta_\xi^\alpha)_{\xi \in s_\alpha}$ with $|\theta_\xi^\alpha|=1$ such that $T(e_\alpha^*) = \sum_{\xi \in s_\alpha} \theta_\xi^\alpha e_\xi^*$.
\end{proof}

We can now establish the main result of this section:

\begin{theorem} \label{perm}
Let $\F$ and $\G$ be compact and hereditary families on $\kappa$ with the additional property that 
$$[\kappa]^1 \subseteq \overline{\FM} \cap \overline{\GM}.$$
If $T:X_{\F}\to X_{\G}$ is an isometry, then there exists a permutation $\pi$ of $\kappa$ and a sequence of signs $(\theta_\alpha)_\alpha$ such that $Te_\alpha=\theta_\alpha e_{\pi(\alpha)}$ for all $\alpha\in\kappa$.
\end{theorem}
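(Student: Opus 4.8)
The plan is to pass to adjoints and apply Proposition \ref{permprop} in both directions. Since $T$ is a surjective isometry, its adjoint $T^*:X_\G^*\to X_\F^*$ and the map $S:=(T^{-1})^*=(T^*)^{-1}:X_\F^*\to X_\G^*$ are both surjective isometries; being adjoints they are weak$^*$--weak$^*$ continuous, and as surjective dual isometries they carry $Ext(B_{X_\G^*})$ bijectively onto $Ext(B_{X_\F^*})$ and back. The hypothesis $[\kappa]^1\subseteq\overline{\FM}\cap\overline{\GM}$ is exactly what lets me feed each of $S$ and $T^*$ into Proposition \ref{permprop} (for $T^*$ with the roles of $\F$ and $\G$ interchanged, which is where $\overline{\GM}\supseteq[\kappa]^1$ is used, and for $S$ where $\overline{\FM}\supseteq[\kappa]^1$ is used). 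I thus obtain, for each $\beta$, a finite set $t_\beta\in\F$ and signs with $T^*(e_\beta^*)=\sum_{\eta\in t_\beta}\mu_\eta^\beta e_\eta^*$, and for each $\alpha$ a finite $s_\alpha\in\G$ with $S(e_\alpha^*)=\sum_{\xi\in s_\alpha}\theta_\xi^\alpha e_\xi^*$. Evaluating at basis vectors translates these back to $T$ itself: $(Te_\alpha)(\beta)=\mu_\alpha^\beta\cdot[\alpha\in t_\beta]$ and $(T^{-1}e_\beta)(\alpha)=\theta_\beta^\alpha\cdot[\beta\in s_\alpha]$, so the nonzero coefficients of $Te_\alpha$ and of $T^{-1}e_\beta$ are unimodular.

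The heart of the argument is to upgrade these ``signed sums over sets'' to ``single signed basis vectors'', and the key device is sign variation. Fix $s\in\GM$; for \emph{any} choice of signs $(\theta_\beta)_{\beta\in s}$ the vector $\sum_{\beta\in s}\theta_\beta e_\beta^*$ is extreme in $B_{X_\G^*}$ by Proposition \ref{extremepoints}, so its image $\sum_{\beta\in s}\theta_\beta\sum_{\eta\in t_\beta}\mu_\eta^\beta e_\eta^*$ must again be a unimodular sum over a set in $\FM$. Hence, for every fixed $\eta$, the coefficient $\sum_{\beta\in s,\,\eta\in t_\beta}\theta_\beta\mu_\eta^\beta$ has modulus $0$ or $1$ for \emph{all} sign choices; were two distinct $\beta$'s in $s$ to share a common $\eta$ in their $t_\beta$'s, I could align phases to make this coefficient have modulus at least $2$, a contradiction. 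So the sets $\{t_\beta:\beta\in s\}$ are pairwise disjoint and $\Phi(s):=\bigsqcup_{\beta\in s}t_\beta\in\FM$. The symmetric argument for $S$ gives that $\{s_\alpha:\alpha\in u\}$ is pairwise disjoint with $\Psi(u):=\bigsqcup_{\alpha\in u}s_\alpha\in\GM$ for $u\in\FM$; and tracking one extreme point through $S\circ T^*=\mathrm{id}$ yields $\Psi\circ\Phi=\mathrm{id}_{\GM}$ and $\Phi\circ\Psi=\mathrm{id}_{\FM}$, so $\Phi,\Psi$ are mutually inverse bijections between $\GM$ and $\FM$.

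Now I squeeze cardinalities. All the sets in play are finite (they are members of families), so for $s\in\GM$, using $|t_\beta|\ge1$ and $|s_\eta|\ge1$,
$$|s|=|\Psi(\Phi(s))|=\sum_{\eta\in\Phi(s)}|s_\eta|\ \ge\ |\Phi(s)|=\sum_{\beta\in s}|t_\beta|\ \ge\ |s|,$$
forcing equality throughout, whence $|t_\beta|=1$ for all $\beta\in s$ and $|s_\eta|=1$ for all $\eta\in\Phi(s)$. Since every $\beta$ lies in some $s\in\GM$ (a compact hereditary family admits a maximal superset of any of its members, as an infinite ascending chain would produce an infinite limit point, contradicting closedness in $2^\kappa$), I get $|t_\beta|=1$ for all $\beta$, and symmetrically $|s_\alpha|=1$ for all $\alpha$. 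Writing $t_\beta=\{\phi(\beta)\}$ and $s_\alpha=\{\psi(\alpha)\}$, the equivalences $(Te_\alpha)(\beta)\ne0\iff\alpha=\phi(\beta)$ and $(T^{-1}e_\beta)(\alpha)\ne0\iff\beta=\psi(\alpha)$, fed into $T^{-1}T=\mathrm{id}$ and $TT^{-1}=\mathrm{id}$, force $\phi\circ\psi=\mathrm{id}$ and $\psi\circ\phi=\mathrm{id}$. Thus $\phi$ is a permutation of $\kappa$; setting $\pi:=\psi=\phi^{-1}$, the vector $Te_\alpha$ is supported on the single index $\pi(\alpha)$ with unimodular coefficient $\theta_\alpha$, i.e. $Te_\alpha=\theta_\alpha e_{\pi(\alpha)}$.

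The main obstacle I anticipate is exactly this singleton reduction. Proposition \ref{permprop} by itself leaves open that $Te_\alpha$ spreads over several coordinates: a unimodular sum over a set no two of whose points co-occur in $\G$ has norm $1$ regardless of its size, so the isometry condition alone does not pin down the support, and a naive coefficient comparison through $T^{-1}T=\mathrm{id}$ can be defeated by cancellation among unimodular terms. Both difficulties dissolve together once the sign-variation/disjointness step produces the inverse bijections $\Phi,\Psi$ and the ensuing cardinality identity; it is the \emph{bijectivity} of $T$, not merely its being an isometry, that carries the argument. A minor point to check carefully is the existence of maximal supersets in compact hereditary families, used to globalize the singleton conclusion from fixed maximal sets to all of $\kappa$.
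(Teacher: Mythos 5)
Your proof is correct, and it opens exactly as the paper's does: pass to the adjoint $T^*:X_\G^*\to X_\F^*$, observe that it is a weak$^*$-weak$^*$ continuous isometry and hence carries extreme points to extreme points, and apply Proposition \ref{permprop}. The difference lies in what follows: at that point the paper simply asserts that the conclusion ``follows similarly as Theorem 10 of \cite{BrechFerencziTcaciuc}'', delegating the passage from ``$T^*(e_\beta^*)$ is a unimodular sum over some $t_\beta\in\F$'' to ``$T$ is a signed permutation of the basis'', whereas you supply a complete, self-contained argument for this step. Your completion is sound: applying Proposition \ref{permprop} a second time to $(T^*)^{-1}=(T^{-1})^*$ is legitimate, since both halves of the hypothesis $[\kappa]^1\subseteq\overline{\FM}\cap\overline{\GM}$ are available, one for each direction; the sign-variation argument does force the supports $t_\beta$, $\beta\in s\in\GM$, to be pairwise disjoint (aligning phases would otherwise produce a coefficient of modulus at least $2$ in an extreme point, contradicting Proposition \ref{extremepoints}); tracking an extreme point through $S\circ T^*=\mathrm{id}$ indeed makes $\Phi$ and $\Psi$ mutually inverse bijections between $\GM$ and $\FM$, and the cardinality squeeze then collapses all supports to singletons. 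The auxiliary fact you flag --- every member of a compact hereditary family lies in a maximal one, because an infinite increasing chain in $\G$ would converge in $2^\kappa$ to an infinite set, contradicting closedness --- is correct and is precisely what globalizes the singleton conclusion from maximal sets to all of $\kappa$, using $[\kappa]^{\leq 1}\subseteq\G$. What your route buys is independence from the external reference and an explicit display of where surjectivity of $T$ (through the inverse bijections $\Phi,\Psi$), rather than mere isometric embedding, carries the argument; what the paper's citation buys is brevity, at the cost of leaving the decisive support-disjointness and counting analysis implicit.
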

\begin{proof}
Given an isometry $T:X_{\F}\to X_{\G}$, the adjoint operator $T^*:X_{\G}^*\to X_{\F}^*$ is weak$^*$-weak$^*$ continuous and is also an isometry, so that $T^*$  takes the extreme points of $B_{X_\G^*}$ to the extreme points of $B_{X_\F^*}$. By Proposition \ref{permprop}, for each $\alpha \in \kappa$, there is $s_\alpha \in \F$ and a sequence $(\theta_\xi^\alpha)_{\xi \in s_\alpha}$ such that $|\theta_\xi^\alpha|=1$ and $T^*(e_\alpha^*) = \sum_{\xi \in s_\alpha} \theta_\xi^\alpha e_\xi^*$. The conclusion now follows similarly as Theorem 10 of \cite{BrechFerencziTcaciuc}. 
\end{proof}

Let us now compare the results of this section to those of \cite{BrechFerencziTcaciuc}. Notice that both in the previous Theorem \ref{perm} and Proposition \ref{permprop}, we have a weaker assumption on the families $\F$ and $\G$: they suppose them to be regular families and we replace the spreading condition by the strictly weaker topological condition that the $\overline{\FM}$ and $\overline{\GM}$ contain the singletons. On the other hand, we start with a slightly stronger assumption on the operator: in case of Proposition \ref{permprop} we ask $T: X_\F^* \rightarrow X_\G^*$ to be weak$^*$-weak$^*$ continuous and to preserve extreme points, while in \cite{BrechFerencziTcaciuc} they require only the latter. As a consequence, we start in Theorem \ref{perm} from an isometry $T: X_\F \rightarrow X_\G$, while they start from an isometry $T: X_\F^* \rightarrow X_\G^*$. 

Given two compact and hereditary families $\F$ and $\G$ such that $[\kappa]^1  \subseteq \overline{\FM} \cap \overline{\GM}$, we do not know the answer to the following question:

\begin{question}
Is every isometry $T: X_\F^* \rightarrow X_\G^*$ weak$^*$-weak$^*$ continuous? Or, equivalently, is every isometry $T: X_\F^* \rightarrow X_\G^*$ the adjoint operator of an isometry $S: X_\G \rightarrow X_\F$?
\end{question}

If the answer to these questions is positive, we could get a full version of Corollary 12 of \cite{BrechFerencziTcaciuc} with the weaker assumption on the families. Nevertheless, given a $\pi$-homeomorphism between two hereditary families $\F$ and $\G$ induced by a permutation $\pi$ of $\kappa$, the unique linear continuous operator taking $e_\alpha$ to $e_{\pi(\alpha)}$ is clearly an isometry between $X_\F$ and $X_\G$. Hence, we have the following corollary from our previous results:

\begin{corollary}\label{coropi}
Let $\F$ and $\G$ be compact and hereditary families on $\kappa$ such that $[\kappa]^1 \subseteq \overline{\FM} \cap \overline{\GM}$. Then TFAE:
\begin{enumerate}[(i)]
    \item[$(ii)$] $\F$ and $\G$ are $\pi$-homeomorphic.
    \item[$(iii)$] $X_\F$ and $X_\G$ are isometric.
\end{enumerate}
\end{corollary}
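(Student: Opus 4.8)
The statement asserts the equivalence of $(ii)$ and $(iii)$, so the plan splits into two implications, of which only one carries content. The implication $(ii) \Rightarrow (iii)$ is already available: as noted in the discussion preceding the statement (and in the preliminaries), if $\pi$ is a permutation of $\kappa$ with $\G = \pi[\F]$, then the linear map determined by $e_\alpha \mapsto e_{\pi(\alpha)}$ is an isometry of $X_\F$ onto $X_\G$, so $(iii)$ holds. I would therefore dispose of this direction in a single sentence and concentrate on $(iii) \Rightarrow (ii)$.

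For $(iii) \Rightarrow (ii)$ the plan is to feed the isometry straight into Theorem \ref{perm}. The hypothesis $[\kappa]^1 \subseteq \overline{\FM} \cap \overline{\GM}$ is precisely what that theorem requires, so from an isometry $T : X_\F \to X_\G$ I obtain a permutation $\pi$ of $\kappa$ and signs $(\theta_\alpha)_\alpha$ with $|\theta_\alpha| = 1$ and $T e_\alpha = \theta_\alpha e_{\pi(\alpha)}$ for every $\alpha \in \kappa$. What remains is a bookkeeping step: to verify that this $\pi$ actually witnesses $\pi$-homeomorphism, that is, that $\pi[\F] = \G$.

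To establish $\pi[\F] = \G$ I would test the isometry on indicator vectors. Fix $s \in \F$ and set $x = \sum_{\alpha \in s} e_\alpha$. Since $s \in \F$ and every coordinate of $x$ has modulus at most $1$, one has $\Vert x \Vert_\F = |s|$. As $T$ is isometric, $\Vert T x \Vert_\G = |s|$ as well, while $T x = \sum_{\alpha \in s} \theta_\alpha e_{\pi(\alpha)}$ is a unimodular vector supported exactly on $\pi[s]$, so that $\Vert T x \Vert_\G = \sup_{t \in \G} |t \cap \pi[s]|$. The only way this supremum can reach $|s| = |\pi[s]|$ is for some $t \in \G$ to contain $\pi[s]$, whereupon heredity of $\G$ forces $\pi[s] \in \G$; hence $\pi[\F] \subseteq \G$. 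Running the identical argument on the inverse isometry $T^{-1}$, which by direct inversion is the signed permutation associated to $\pi^{-1}$, yields $\pi^{-1}[\G] \subseteq \F$, i.e. the reverse inclusion $\G \subseteq \pi[\F]$. Combining the two gives $\pi[\F] = \G$, which is $(ii)$.

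The only place demanding care is this final passage from ``$T$ is a signed permutation'' to ``$\pi[\F] = \G$'', and there heredity does all the work: it upgrades the numerical equality ``some $t \in \G$ meets $\pi[s]$ in $|s|$ points'' into genuine membership $\pi[s] \in \G$. In truth the substance of the corollary lies upstream, in Theorem \ref{perm} (and behind it Proposition \ref{permprop} together with the extreme-point description of Proposition \ref{extremepoints}); once that machinery is granted, the corollary is essentially a repackaging, and I would present it as such.
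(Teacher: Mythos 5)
Your proof is correct and follows essentially the same route as the paper: the direction $(ii)\Rightarrow(iii)$ is the induced signed-permutation operator already noted before the corollary, and $(iii)\Rightarrow(ii)$ is an application of Theorem \ref{perm}. Your explicit verification that the permutation produced by Theorem \ref{perm} indeed satisfies $\pi[\F]=\G$ (via norms of indicator vectors and heredity) is a detail the paper leaves implicit, and it is carried out correctly.
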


\section{Uniqueness of regular families under permutations}

In the previous section we improved the results of \cite{BrechFerencziTcaciuc} by getting weaker assumptions on families which are sufficient to guarantee that the combinatorial spaces are isometric iff the families are $\pi$-homeomorphic. In this section we deal with families on the countable infinite cardinal $\omega$ and improve their results in another direction. We show that two $\pi$-homeomorphic regular families are actually the same. Let us consider the following definitions:

\begin{definition}\label{defFamiliesN}
Given $s, t\in\fin$, we say that $t$ is a \emph{spread} of $s$ if there is an injective mapping $\sigma:s\longrightarrow \omega$ such that $\sigma(i)\geq i$ for every $i\in s$, and $\sigma[s]=t$; we will abbreviate this fact by saying that $\sigma[s]$ is a spread of $s$.

We call a family $\F$ \emph{spreading} if $\sigma[t]\in\F$ for every $t\in\F$ and every spread $\sigma[t]$ of $t$; and we say that a family $\F$ is \emph{regular} if it is compact, hereditary and spreading.
\end{definition}

The main purpose of this section is to show Theorem \ref{uniqueness}, which states that hereditary and spreading families on $\omega$ are invariable under permutations. In particular, different regular families on $\omega$ cannot be permuted one to the other. As a consequence of Theorem 10 of \cite{BrechFerencziTcaciuc}, we get that the combinatorial spaces of two different regular families are not isometric.

We start with the following two simple lemmas, which will be used in the proof. 

\begin{lemma}\label{otherspreading} 
Given $s\in\fin$ and a spread $\sigma[s]$ of $s$, there is a spread $\sigma'[s]$ of $s$ such that $\sigma'[s]=\sigma[s]$ and $\sigma'(i)=i$ for every $i\in s\cap\sigma[s]$.
\end{lemma}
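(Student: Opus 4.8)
The plan is to construct $\sigma'$ explicitly by keeping the elements of $s\cap\sigma[s]$ fixed and rearranging the rest in an order-preserving fashion, then to check that this rearrangement still satisfies the spread inequality. Write $t=\sigma[s]$ and set $C=s\cap t$, $A=s\setminus t$ and $B=t\setminus s$. Since $\sigma$ is an injection from $s$ onto $t$ we have $|s|=|t|$, hence $|A|=|B|$; enumerate $A=\{a_1<\dots<a_k\}$ and $B=\{b_1<\dots<b_k\}$. I would define $\sigma'$ to be the identity on $C$ and the unique increasing bijection $a_j\mapsto b_j$ on $A$. By construction $\sigma'$ is injective, its image is $C\cup B=t=\sigma[s]$, and $\sigma'(i)=i$ for every $i\in C=s\cap\sigma[s]$, so the only thing left to verify is the spread condition $\sigma'(i)\geq i$, which is immediate on $C$ and amounts to proving $b_j\geq a_j$ for each $j$ on $A$.

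The key step, and the main obstacle, is to extract from the (not necessarily monotone) original spread $\sigma$ a threshold-domination inequality. For any $v\in\omega$, consider $S_v=\{i\in s:i\geq v\}$; since $\sigma(i)\geq i\geq v$ for every $i\in S_v$ and $\sigma$ is injective, $\sigma[S_v]$ is a subset of $t$ of size $|S_v|$ all of whose elements are $\geq v$. This yields
$$|\{u\in t:u\geq v\}|\;\geq\;|\{u\in s:u\geq v\}|\qquad\text{for every }v\in\omega.$$
Splitting $s=C\sqcup A$ and $t=C\sqcup B$ and cancelling the common count $|\{u\in C:u\geq v\}|$ from both sides gives $|\{u\in B:u\geq v\}|\geq|\{u\in A:u\geq v\}|$.

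Finally I would apply this inequality at the threshold $v=a_j$. The right-hand side then equals $k-j+1$ (the elements $a_j,\dots,a_k$), so $B$ has at least $k-j+1$ elements that are $\geq a_j$; since $b_j,\dots,b_k$ are exactly the $k-j+1$ largest elements of $B$, this forces $b_j\geq a_j$, as required. Hence $\sigma'$ is a spread of $s$ with $\sigma'[s]=\sigma[s]$ fixing every point of $s\cap\sigma[s]$. I expect the only delicate point to be the passage from the abstract spread $\sigma$ to the order-preserving matching on the difference sets, which the threshold-counting argument above resolves without ever requiring $\sigma$ itself to be monotone.
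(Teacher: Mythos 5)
Your proof is correct, and its skeleton coincides with the paper's: both split off the common part ($\Delta=s\cap\sigma[s]$ in the paper, your $C$), enumerate the two difference sets increasingly ($s\setminus\Delta=\{m_1,\dots,m_k\}_<$ and $\sigma[s]\setminus\Delta=\{n_1,\dots,n_k\}_<$, your $A$ and $B$), and define $\sigma'$ as the identity on the common part together with the increasing matching $m_j\mapsto n_j$; in both arguments everything reduces to the inequality $m_j<n_j$ (your $a_j\le b_j$). Where you genuinely differ is in how that inequality is verified. The paper disposes of it in one line, asserting that a configuration $m_j<n_j<n_{j+1}<m_{j+1}$ would contradict the spread property; taken literally, this only rules out a transition from $m_j<n_j$ to $n_{j+1}<m_{j+1}$ at consecutive indices, i.e.\ it supplies the inductive step $m_j<n_j\Rightarrow m_{j+1}<n_{j+1}$, and both this step and the missing base case $m_1<n_1$ ultimately rest on exactly the counting you make explicit. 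Your threshold-domination argument --- $\sigma$ injects $\{i\in s: i\ge v\}$ into $\{u\in\sigma[s]: u\ge v\}$, hence $|\{u\in B: u\ge v\}|\ge|\{u\in A: u\ge v\}|$ after cancelling the common part, and evaluating at $v=a_j$ forces $b_j\ge a_j$ --- is a complete, self-contained proof of the matching inequality; it never uses any monotonicity of $\sigma$ and would work verbatim for finite subsets of an arbitrary linear order. So: same decomposition and same construction of $\sigma'$, but your verification of the crucial step is more rigorous than, and effectively fills in, the terse justification the paper gives.
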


\begin{proof} 
Let $s \in \fin$ and let $\sigma[s]$ be a spread of $s$. Define $\Delta = s \cap \sigma[s]$ and notice that, since $|s| = |\sigma[s]|$, we have that $|s \setminus \Delta| = |\sigma[s] \setminus \Delta|$. Let $s \setminus \Delta = \{m_1, \dots, m_k\}_<$ and $\sigma[s] \setminus \Delta = \{n_1, \dots, n_k\}_<$. 

Now notice that if there was $1\leq j< k$ such that $m_j<n_j<n_{j+1}<m_{j+1}$, then $\sigma[s]$ would not be a spread of $s$. Hence, $m_j < n_j$ for every $1 \leq j \leq k$. 

Define $\sigma':s\longrightarrow \omega$ by $\sigma'(i)=i$ if $i\in \Delta$ and $\sigma'(m_j)=n_j$ if $1\leq j\leq k$. Notice that $\sigma'[s] = \Delta \cup  \{n_1, \dots, n_k\} = \sigma[s]$, as wanted.  
\end{proof}

\begin{lemma}\label{lemmaclosure} 
If $\F\subseteq \fin$ is a spreading family, then so is $\overline{\F}^\subseteq$.
\end{lemma}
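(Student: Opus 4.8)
The plan is to reduce spreadness of $\overline{\F}^\subseteq$ to spreadness of $\F$ by extending a spread map defined on a set $s\in\overline{\F}^\subseteq$ to a spread map defined on a witnessing superset $t\in\F$. Concretely, I would fix $s\in\overline{\F}^\subseteq$ together with a spread $\sigma[s]$ of $s$, witnessed by an injection $\sigma:s\longrightarrow\omega$ with $\sigma(i)\geq i$ for all $i\in s$. By definition of $\overline{\F}^\subseteq$ there is $t\in\F$ with $s\subseteq t$, and it suffices to produce $t'\in\F$ with $\sigma[s]\subseteq t'$. Such a $t'$ will be obtained as $\tilde\sigma[t]$ for a suitable extension $\tilde\sigma$ of $\sigma$.

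The key step is to extend $\sigma$ to an injection $\tilde\sigma:t\longrightarrow\omega$ satisfying $\tilde\sigma(i)\geq i$ for every $i\in t$ and agreeing with $\sigma$ on $s$. I would define $\tilde\sigma$ on the finite set $t\setminus s$ greedily: enumerating $t\setminus s$ in increasing order and, for each such $j$, choosing $\tilde\sigma(j)$ to be some value $\geq j$ that lies neither in $\sigma[s]$ nor among the previously assigned values. This is always possible, since at each stage only finitely many values are forbidden, so infinitely many admissible values $\geq j$ remain. By construction $\tilde\sigma$ is injective, satisfies $\tilde\sigma(i)\geq i$ on all of $t$ (on $s$ this is inherited from $\sigma$), and restricts to $\sigma$ on $s$.

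With $\tilde\sigma$ in hand, $\tilde\sigma[t]$ is a spread of $t$, so spreadness of $\F$ yields $\tilde\sigma[t]\in\F$. Since $\sigma[s]=\tilde\sigma[s]\subseteq\tilde\sigma[t]$, we conclude $\sigma[s]\in\overline{\F}^\subseteq$, which is exactly what is required. As $s$ and $\sigma$ were arbitrary, $\overline{\F}^\subseteq$ is spreading.

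There is no serious obstacle, but the one point deserving care is the extension, and it is instructive to observe that $\tilde\sigma$ generally cannot be taken order-preserving even when $\sigma$ is: holding the values $\sigma(i)$ fixed for $i\in s$ may leave no room to insert increasing images for the elements of $t\setminus s$ (for example, fixing $\sigma(1)=2$ and $\sigma(3)=3$ leaves no integer available strictly between $2$ and $3$ for an element inserted in position $2$). This is precisely why Definition \ref{defFamiliesN} requires only that the spread map be injective with $\sigma(i)\geq i$, and under that definition the greedy construction always succeeds, all the relevant sets being finite while $\omega$ is infinite. Hence the argument is entirely elementary.
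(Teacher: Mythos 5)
Your proof is correct and follows essentially the same route as the paper's: both extend the spread map $\sigma$ from $s$ to a witnessing superset $t\in\F$ by assigning fresh values $\geq j$ to the elements $j \in t\setminus s$, then apply spreadingness of $\F$ to $t$ and conclude $\sigma[s]\subseteq \sigma'[t]\in\F$. The only (immaterial) difference is that the paper takes all new values above $\max(t\cup\sigma[s])$ in one stroke, while you choose them greedily one at a time.
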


\begin{proof} 
Let $\F\subseteq \fin$ be a spreading family, $s\in\overline{\F}^\subseteq$ and $\sigma[s]$ a spread of $s$. We must show that $\sigma[s]\in\overline{\F}^\subseteq$. 

Let $t\in\F$ be such that $s\subseteq t$ and let $t\setminus s = \{m_1,m_2, \dots,m_k\}_<$. Then, consider integers $n_k>\dots>n_2>n_1>\max (t \cup \sigma[s])$ and define $\sigma':t\longrightarrow\omega$ by $\sigma'(j)=\sigma(j)$ if $j\in s$; and $\sigma'(m_i)=n_i$ for $1 \leq i\leq k$. It follows that $\sigma'[t]$ is a spread of $t$ and that $\sigma[s] = \sigma'[s]\subseteq \sigma'[t]$. By being $\F$ spreading, we get that $\sigma'[t]\in\F$ and, therefore, $\sigma[s]\in\overline{\F}^\subseteq$.
\end{proof}

We are now ready to prove our main result:

\begin{theorem}\label{uniqueness}
If $\F$ and $\G$ are hereditary, spreading and $\pi$-homeomorphic families\footnote{In this result we do not actually need to assume that $\F$ and $\G$ contain all singletons. If they contain, the proof for $n=1$ in the induction would be simpler, but we opted to present a proof which makes sense both under this assumption or without it.} on $\omega$, then $\F= \G$.
\end{theorem}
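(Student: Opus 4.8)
The plan is to prove the equality level by level, by induction on the cardinality $n$ of the sets, showing that $\F$ and $\G$ contain exactly the same sets of size $n$; since every finite set has some size, this yields $\F=\G$. Throughout I will use the elementary reformulation of the spreading condition in terms of the coordinatewise order: writing $s=\{a_1,\dots,a_n\}_<$ and $t=\{b_1,\dots,b_n\}_<$, the set $t$ is a spread of $s$ exactly when $a_i\le b_i$ for every $i$; let me abbreviate this by $s\preceq t$. With this language, Definition \ref{defFamiliesN} says that, for each fixed $n$, the level $\F\cap[\omega]^n$ is an up-set for $\preceq$, and likewise for $\G$. I will also exploit the symmetry of the hypothesis: since $\G=\pi[\F]$ forces $\F=\pi^{-1}[\G]$ and both families are spreading, the roles of $\F$ and $\G$ may be interchanged (replacing $\pi$ by $\pi^{-1}$) at any point, so it suffices to rule out a set lying in one family but not the other.

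For the base of the induction one checks that $\F$ and $\G$ agree on sets of size $\le 1$. Under the standing convention $[\omega]^{\le 1}\subseteq\F$ this is immediate; without it, one has to argue that a singleton belongs to $\F$ iff it belongs to $\G$ using only heredity, compactness and the permutation, which is the (slightly) more delicate base case alluded to in the footnote.

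For the inductive step, assume $\F$ and $\G$ contain the same sets of size $<n$ and suppose, towards a contradiction, that they differ at level $n$. By symmetry I may pick $w=\{a_1,\dots,a_n\}_<\in\F\setminus\G$ and, among all members of $(\F\setminus\G)\cap[\omega]^n$, I choose $w$ minimal for $\preceq$ (possible since $\preceq$ is well founded on $[\omega]^n$). The first step is to upgrade this minimality: if some $w'\prec w$ of size $n$ lay in $\F$, then by minimality $w'\notin\F\setminus\G$, hence $w'\in\G$, and then $w$, being a spread of $w'\in\G$, would lie in $\G$ --- contradiction. Thus $w$ is in fact $\preceq$-minimal in the whole level $\F\cap[\omega]^n$. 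Dually, since $\G\cap[\omega]^n$ is an up-set and $w\notin\G$, no member of $\G\cap[\omega]^n$ can be $\preceq w$. Finally, heredity together with the inductive hypothesis shows that every proper subset of $w$ lies in $\G$, so $w$ is a minimal nonmember of $\G$ both for inclusion and for $\preceq$.

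It remains to derive a contradiction, and this is where the permutation and the two preparatory lemmas enter. Since $w\in\F$ we have $\pi[w]\in\G$, so $\G$ does contain a set of size $n$; the difficulty is that the reordering performed by $\pi$ places $\pi[w]$ at an a priori uncontrolled position, with $\pi[w]\not\preceq w$. The heart of the argument is to convert this image into a witness $u\in\G\cap[\omega]^n$ with $u\preceq w$, which contradicts the previous paragraph. To build $u$ I would start from $\pi[w]\in\G$ and from the size-$(n-1)$ sets $w\setminus\{a_j\}$, which are already known to lie in $\G$, and slide coordinates downward using that $\G$ is an up-set for $\preceq$ and that its downward closure is again spreading (Lemma \ref{lemmaclosure}); Lemma \ref{otherspreading} is used to normalize the relevant spreads so that they fix the elements common to $w$ and its image, reducing the bookkeeping to the genuinely moved coordinates, where the inequality $m_j<n_j$ extracted in that lemma controls the sorting. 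Carrying out this construction --- that is, taming the reordering caused by $\pi$ well enough to manufacture an element of $\G$ coordinatewise below $w$ --- is the main obstacle, and it is precisely the point at which the spreading of \emph{both} families, and not merely of one of them, is indispensable.
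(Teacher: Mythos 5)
Your preparatory reductions are correct: the $\preceq$ reformulation of spreading, the well-foundedness of $\preceq$ on $[\omega]^n$, the upgrade of a $\preceq$-minimal element $w$ of $(\F\setminus\G)\cap[\omega]^n$ to a $\preceq$-minimal element of all of $\F\cap[\omega]^n$, the observation that no member of $\G\cap[\omega]^n$ is $\preceq w$, and the fact that every proper subset of $w$ lies in $\G$ are all valid. But the argument stops exactly where the theorem lives: you never construct the element $u\in\G\cap[\omega]^n$ with $u\preceq w$, and you acknowledge that doing so ``is the main obstacle.'' This is not a verification left to the reader; it is the entire content of the theorem. Moreover, the tools you invoke cannot produce such a $u$: spreading of $\G$ (or of a downward closure, Lemma \ref{lemmaclosure}) only moves sets \emph{upward} in $\preceq$, and Lemma \ref{otherspreading} merely normalizes an already given spread, so neither provides any mechanism for ``sliding coordinates downward.'' The only inputs pointing toward $\G$ are $\pi[w]\in\G$, which may sit arbitrarily high in the $\preceq$ order, and the membership of the $(n-1)$-element subsets of $w$ in $\G$, which is perfectly compatible with $w\notin\G$ since heredity has no converse. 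Nothing in the proposal extracts from the global bijection $\pi$ the positional control that the construction of $u$ would require.

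It is worth seeing how the paper gets around precisely this impasse: not by a minimal counterexample, but by strengthening the induction so that the inductive hypothesis can be applied to \emph{derived} families rather than to $\F$ and $\G$ themselves. The statement proved by induction quantifies over all pairs of hereditary spreading families related by some permutation. In the step, for $s\in\F\cap[\omega]^n$ one sets $I_s=\{i\in\omega:\{i\}\cup s\notin\F\}$ (an initial segment of $\omega\setminus s$, by spreading), proves that $|I_t|\leq|I_s|$ whenever $t$ is a spread of $s$, and forms $\F_{n,k}=\{s\in\F\cap[\omega]^n:|I_s|\leq k\}$, similarly $\G_{n,k}$. The permutation enters through the identity $J_{\pi[s]}=\pi[I_s]$, which gives $\G_{n,k}=\pi[\F_{n,k}]$; hence the inductive hypothesis applies to the pair $\overline{\F}_{n,k}^{\subseteq}$, $\overline{\G}_{n,k}^{\subseteq}$ (hereditary and spreading by the claim and Lemma \ref{lemmaclosure}) and yields $\F_{n,k}=\G_{n,k}$ for every $k$, from which level $n+1$ of both families is reconstructed by one and the same formula. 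That counting device is what converts the permutation into coordinatewise information, and it is exactly the ingredient missing at the gap in your argument; as written, your proof is incomplete at its crucial step.
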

\begin{proof}
We prove, by induction on $n$, that for any two hereditary and spreading families $\F$ and $\G$, if there is a permutation $\pi: \omega \rightarrow \omega$ such that $\G=\pi[\F]$, then $\F \cap [\omega]^{\leq n}= \G \cap [\omega]^{\leq n}$.

For $n=1$, let 
$$I = \{i \in \omega: \{i\} \notin \F\}\text{ and } J= \{i \in \omega: \{i\} \notin \G\}.$$ Notice that, from spreading, $I$ and $J$ are initial segments of $\omega$. Moreover, $i \in I$ iff $\pi(i)\in J$, so that $|I|=|J|$. It follows that $I=J$ and, therefore, $\F\cap[\omega]^1 = \{\{i\}: i \notin I\} = \{\{i\}: i \notin J\} =\G \cap [\omega]^1$.   

Assume that the statement holds for $n$ and let us prove it for $n+1$. Given $s \in \F \cap [\omega]^n$, let
$$I_s=\{i \in \omega :  \{i \}\cup s \notin \F\}.$$
Let us prove some properties of $I_s$. It is easy to see, from spreading, that $I_s$ is an initial segment of $\omega \setminus s$. Moreover, we have the following:

\begin{claim} If $s \in \F \cap[\omega]^n$ and $t \in [\omega]^n$ is a spread of $s$ (in particular, $t \in \F$), then $|I_t| \leq |I_s|$.\end{claim} 

\begin{proof}[Proof of the claim.] 
First notice that 
$$I_t \setminus s = I_t \setminus (s\cup t) \subseteq I_s \setminus (s\cup t) = I_s \setminus t,$$ 
where the two equalities follow from the fact that $I_s \cap s = I_t \cap t = \emptyset$ and the inclusion follows from spreading of $\F$.

Second, let us show that $|I_t \cap s| \leq |I_s \cap t|$. Indeed, by Lemma \ref{otherspreading}, consider $\sigma: s \rightarrow t$ a bijection such that $\sigma(i) \geq i$ for every $i \in s$ and, additionally, $\sigma(i) = i$ for every $i \in s \cap t$. We show that if $j \in I_t \cap s$, then $\sigma(j) \in I_s \cap t$. Indeed, fix $j \in I_t \cap s$ and let $\sigma':\{\sigma(j)\} \cup s \rightarrow \{j\} \cup t$ be defined by $\sigma'(i) = \sigma(i)$ for $i \in s \setminus \{j,\sigma(j)\}$, $\sigma'(j) = j$ and $\sigma'(\sigma(j))=\sigma(j)$. $\sigma'$ is well-defined (since $\sigma (i) = i$ for $i \in s\cap t$) and it witnesses that $\{j\} \cup t$ is a spread of $\{\sigma(j)\} \cup s$.

Since $j \in I_t$, then $\sigma'[\{\sigma(j)\} \cup s] = \{j\} \cup t \notin \F$. Also, $\F$ is spreading, so that $\{\sigma(j)\} \cup s \notin \F$, that is, $\sigma(j) \in I_s$. Hence, $\sigma(j) \in I_s \cap t$ and we conclude that $|I_t \cap s| \leq |I_s \cap t|$. Summing up we get that $|I_t| \leq |I_s|$ and this concludes the proof of the claim.
\end{proof}

Now, for each $k \in \omega$, let
$$\F_{n,k} = \{s \in \F \cap [\omega]^n: |I_s| \leq k\}.$$
It follows from the claim that $\F_{n,k}$ is a spreading family, for each $k \in \omega$. 

For each $t \in \G \cap [\omega]^n$ and each $k\in\omega$, define 
$$J_t=\{i \in \omega :  \{i \}\cup t \notin \G\} \text{ and } \G_{n,k} = \{t \in \G \cap [\omega]^n: |J_t| \leq k\},$$
and get similar properties as for $I_s$ and $\F_{n,k}$. In particular, $\G_{n,k}$ is a spreading family for every $k\in\omega$.

Since both $\F_{n,k}$ and $\G_{n,k}$ are spreading and $\G_{n,k} = \pi[\F_{n,k}]$, then by Lemma \ref{lemmaclosure}, the downward closures $\overline{\F}_{n,k}^{\subseteq}$ and $\overline{\G}_{n,k}^{\subseteq}$
are hereditary and spreading families, and $\overline{\G}^\subseteq_{n,k} = \pi[\overline{\F}^{\subseteq}_{n,k}]$. It follows from the inductive hypothesis that $\G_{n,k}= \overline{\G}^\subseteq_{n,k} \cap[\omega]^n= \overline{\F}^{\subseteq}_{n,k} \cap[\omega]^n=\F_{n,k}$, for every $k\in\omega$. 

Finally, 
$$\begin{array}{ll}
\F \cap [\omega]^{n+1} & = \{\{i\}\cup s : i \notin s \text{ and } \exists k \in \omega, \ s \in \F_{n,k} \setminus \F_{n,k-1} \text{ and } i \notin \tau_k(\omega \setminus s)\}\\ 
& = \{\{i\}\cup s : i \notin s \text{ and }\exists k \in \omega, \ s \in \G_{n,k} \setminus \G_{n,k-1} \text{ and } i \notin \tau_k(\omega \setminus s)\} = \G \cap [\omega]^{n+1},
\end{array}$$
where $\tau_k: \wp(\omega)\rightarrow [\omega]^k$ associates to any set $X$ its initial segment $\tau_k(X)$ of size $k$.
\end{proof}

The following corollary follows immediately from \cite[Corollary 12]{BrechFerencziTcaciuc} and Theorem \ref{uniqueness} above, our crucial contribution being the equivalence between (i) and (ii).

\begin{corollary}\label{coroperm}
If $\F$ and $\G$ are regular families on $\omega$, then TFAE:
\begin{enumerate}[(i)]
    \item $\F = \G$.
    \item $\F$ and $\G$ are $\pi$-homeomorphic
    \item $X_\F$ and $X_\G$ are isometric.
    \item $X_\F^*$ and $X_\G^*$ are isometric.
\end{enumerate}
\end{corollary}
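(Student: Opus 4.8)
The plan is to read this as an assembly of results already established rather than a fresh argument: the four statements were observed in Section~1 to be decreasing in strength, so the forward chain $(i)\Rightarrow(ii)\Rightarrow(iii)\Rightarrow(iv)$ costs nothing. Concretely, $(i)\Rightarrow(ii)$ is immediate; given a permutation $\pi$ witnessing $(ii)$ and any choice of unimodular signs $(\theta_\alpha)$, the map $Te_\alpha=\theta_\alpha e_{\pi(\alpha)}$ extends to an isometry, giving $(ii)\Rightarrow(iii)$; and $(iii)\Rightarrow(iv)$ follows by passing to adjoints. Hence everything reduces to closing the cycle.

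For the reverse implications I would first note that a regular family is by definition compact, hereditary and spreading, so $\F$ and $\G$ satisfy the hypotheses of \cite[Corollary~12]{BrechFerencziTcaciuc}. That result already supplies the equivalence of $(ii)$, $(iii)$ and $(iv)$ for regular families---in particular $(iii)\Rightarrow(ii)$ and $(iv)\Rightarrow(ii)$, the latter because the argument there starts directly from a dual isometry $X_\F^*\to X_\G^*$. This collapses $(ii)$, $(iii)$, $(iv)$ into a single equivalence class, and it then remains only to attach $(i)$ to it.

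The one genuinely new ingredient is $(ii)\Rightarrow(i)$. Since regular families are in particular hereditary and spreading, any permutation $\pi$ of $\omega$ with $\G=\pi[\F]$ places $\F$ and $\G$ squarely under the hypotheses of Theorem~\ref{uniqueness}, which forces $\F=\G$. Together with the trivial $(i)\Rightarrow(ii)$ this yields $(i)\Leftrightarrow(ii)$, and combining with the previous paragraph all four statements become equivalent.

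I do not expect a substantive obstacle here, since the hard combinatorial work has already been carried out inside Theorem~\ref{uniqueness}; the only points needing care are checking that regularity delivers exactly the hypotheses demanded by both \cite[Corollary~12]{BrechFerencziTcaciuc} and Theorem~\ref{uniqueness}. The conceptual subtlety worth flagging is that the equivalence of $(i)$ with the remaining three is special to $\omega$: it passes through spreading, and as recalled in the introduction a compact family can be spreading only on $\omega$, so one should not expect $(i)$ to join the equivalence class in the uncountable setting of Corollary~\ref{coropi}.
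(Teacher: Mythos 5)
Your proposal is correct and follows exactly the paper's argument: the forward chain $(i)\Rightarrow(ii)\Rightarrow(iii)\Rightarrow(iv)$ comes from the observations in Section~1, the equivalence of $(ii)$, $(iii)$, $(iv)$ is imported from \cite[Corollary~12]{BrechFerencziTcaciuc} for regular families, and the new implication $(ii)\Rightarrow(i)$ is Theorem~\ref{uniqueness}. The paper states this assembly in one sentence, and your write-up (including the remark that attaching $(i)$ is the genuinely new contribution and is special to $\omega$) matches it precisely.
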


\section{Examples and further remarks}

The purpose of this section is to give some examples and prove several facts which put our main results into a bigger picture. 
In particular, a topological approach is useful.

For most of the results in this section we will use the Cantor-Bendixson rank. We recall that if $X$ is a topological space, then $X'=\allowbreak\{x\in X: x\text{ is not isolated in }X\}$ and given an ordinal $\alpha$, the $\alpha-$th Cantor-Bendixson derivative of $X$ is defined recursively by:
 $$\begin{array}{rcl}
 X^{(0)} & = & X\\
 X^{(\beta+1)}& = & (X^{(\beta)})'\\
 X^{(\lambda)}& = & \bigcap_{\beta<\lambda} X^{(\beta)}, \text{ if }\lambda>0 \text{ is a limit ordinal.}
 \end{array}$$
The Cantor-Bendixson rank of $X$ is the minimal ordinal such that $X^{(\alpha)} = X^{(\alpha+1)}$ and it is denoted by $\rk(X)$.

If $\F\subseteq [\kappa]^{<\omega}$ is a  compact family, then its Cantor-Bendixson index is the smallest ordinal $\alpha$ such that $\F^{(\alpha)}=\emptyset$ and it is therefore a successor ordinal. Moreover, $\rk(\F) < \kappa^+$. 

\subsection{Injective Cantor-Bendixson index}

Given a compact family $\F\subseteq [\kappa]^{<\omega}$, we can define a function $\rk_\F: \F \rightarrow \kappa^+$ which associates to $s\in\F$ the biggest ordinal $\beta$ for which $s\in\F^{(\beta)}$. Notice that the rank of a point coincides with the rank of its image under a homeomorphism between two topological spaces. So, in particular, the rank of a singleton coincides with the rank of its image under a $\pi$-homeomorphism between two families. 

When a compact family $\F$ on $\omega$ is spreading, the corresponding function $\rk_\F: [\omega]^1 \rightarrow \omega_1$ is nondecreasing. In particular, for the Schreier family
$$\Sh:= \{s \in [\omega]^{< \omega}: |s| \leq \min(s)+1\} \cup \{ \emptyset\}$$
(known to be regular) $\rk_\Sh(\{n\})=n$ for every $n \in \omega$, so that $\rk_\Sh \restriction [\omega]^1$ is injective. This imposes restrictions on which permutations induce a homeomorphism of a given family: 

\begin{remark}\label{propinjective}
If $\mathcal{F}$ is a compact family on $\kappa$ such that $\rk_\F\restriction [\kappa]^1$ is injective, then the only permutation $\pi$ of $\kappa$ such that $\pi[\F]=\F$ is the identity map.
\end{remark}

If follows from this remark and Theorem \ref{perm} that if, moreover, $[\kappa]^1 \subseteq \overline{\FM}$, then every isometry of $X_\F$ is given by a change of signs of the elements of its basis. This generalizes \cite[Theorem 15]{BrechFerencziTcaciuc}. 

Also, for families which have stricly increasing rank function, we can easily get the conclusion of Theorem \ref{uniqueness}:

\begin{proposition}\label{Sunicity}
If $\F$ and $\G$ are $\pi$-homeomorphic compact families on $\kappa$ such that $\rk_\F$ is strictly increasing and $\rk_\G$ is nondecreasing, then the only permutation of $\kappa$ inducing an homeomorphism between them is the identity map. In particular, $\F=\G$.
\end{proposition}
\begin{proof} 
Let $\F$ and $\G$ be as in the hypothesis, and let $\pi$ be a permutation of $\kappa$ such that $\G = \pi[\F]$. We will prove that $\pi$ is increasing and since it is a permutation, it has to be the identity map. Assume towards a contradiction that there are $\alpha < \beta < \kappa$ such that $\pi(\alpha)\geq \pi(\beta)$. Then $$\rk_\F(\{\alpha\}) = \rk_\G(\{\pi(\alpha)\}) \geq \rk_\G(\{\pi(\beta)\}) = \rk_\F(\{\beta\}),$$
contradicting the fact that $\rk_\F$ is strictly increasing.
\end{proof}

As an application, we get an example of distinct compact and hereditary families which are $\pi$-homeomorphic. It guarantees that Theorem \ref{uniqueness} cannot be improved by dropping the spreading condition:

\begin{example}\label{nonvoidconditions}
 Let $\pi \neq Id$ be any permutation of $\omega$ and $\F = \pi[\Sh]$. From Remark \ref{propinjective} we get that $\F \neq \Sh$. Since $\F$ is $\pi$-homeomorphic to $\Sh$, $\F$ is also compact and hereditary and it follows from Corollary \ref{coropi} that $X_\Sh$ and $X_\F$ are isometric. 
\end{example}

\subsection{More on spreading families}

In the context of compact spreading families on $\omega$, the hypotheses of Proposition \ref{Sunicity} are almost complete, the only missing one being that the rank of one of the families has to be not only nondecreasing, but strictly increasing. The next example shows that Proposition \ref{Sunicity} does not necessarily hold when $\rk_\F$ is not strictly increasing. It also shows that two families may be $\pi$-homeomorphic with only one of them being spreading.

\begin{example} Consider $\F=[\omega]^{\leq 2}\setminus\big\{\{2,3\}\big\}$ and $\G=[\omega]^{\leq 2}\setminus\big\{\{1,2\}\big\}$. It is clear that $\F$ is compact, that $\rk_\F(\{n\})=1$ for every $n\in\omega$, and that $\G$ is spreading. The permutation $\pi:\omega\longrightarrow\omega$ given by $\pi(1)=3$, $\pi(2)=1$, $\pi(3)=2$, and $\pi(n)=n$ for every $n>3$ witnesses that $\G=\pi[\F]$. Nevertheless, $\F\neq\G$.\end{example}

 It is well known that for every limit ordinal $\alpha<\omega_1$ there is a compact family $\F$ on $\omega$ with rank $\alpha+1$ such that $\rk_\F$ is strictly increasing; for example, the Schreier families (see \cite{ArgyrosStevo}) or the closure of an $\alpha$-uniform front (introduced in \cite{PudlakRodl}). We should also mention the existence of a non hereditary and non spreading compact family $\F$ on $\omega$ with strictly increasing rank: let for example 
 $$\F=\{s\in\Sh:\{n,n+1\}\text{ is not an initial segment of $s$, for every }n\in\omega\};$$ 
 then $\F$ is not hereditary nor spreading since $\{3,5,6\}\in\F$ but $\{5,6\},\{4,5,6\}\notin\F$. Thus, although we cannot apply Theorem \ref{uniqueness} here to conclude that any spreading family $\pi$-homeomorphic to $\F$ is $\F$ itself, we can instead get that conclusion from Proposition \ref{Sunicity}.

As a consequence of Theorem \ref{uniqueness}, if $\F$ and $\G$ are two different hereditary and spreading families, they are not $\pi$-homeomorphic. A natural question to ask is whether $\F$ and $\G$ turn out to be $\pi$-homeomorphic if in addition we ask them to be homeomorphic. The following example answers negatively this question.  

\begin{example}\label{nobanachstone}
Let $\F=[\omega]^{\leq 1}\cup [\omega\setminus \{1\}]^{2}$ and $\G=[\omega]^{\leq 2}$. Consider the mapping $\varphi:\F\longrightarrow\G$ given by $\varphi(\emptyset)=\emptyset$, $\varphi(\{1\})=\{1,2\}$, $\varphi(\{n\})=\{n-1\}$ if $n>1$, $\varphi(\{2,m\})=\{1,m\}$ for $m>2$, and $\varphi(\{n,m\}_<)=\{n-1,m-1\}$ for every $n>2$. It is clear that $\varphi$ is a homeomorphism. Nevertheless, there is no permutation $\pi:\omega \longrightarrow \omega$ such that $\G = \{\pi[s]: s \in \F\}$. Indeed, if such a permutation $\pi$ exists, then $\pi(\{1\})$ should be an isolated point in $\G$ with size 1, which is impossible. Hence, $\F$ and $\G$ are homeomorphic regular families which are not $\pi$-homeomorphic. In particular, $X_\F$ and $X_\G$ are not isometric.
\end{example}

Since each regular family determines a class of compact and hereditary families on $\omega$ which have isometric combinatorial spaces, it would be nice if every compact and hereditary family on $\omega$ would be $\pi$-homeomorphic to a regular one, so that regular families would classify all isometric-types of separable combinatorial spaces. However, this is not the case, as shows the following example.

\begin{example} 
Consider $\F=[\N]^{\leq 2}\setminus\big\{\{n,n+1\}:n\in\N\big\}$ which is clearly compact, hereditary and not spreading. We claim that $\F$ is not $\pi$-homeomorphic to any regular family. Indeed, assume towards a contradiction that there is a regular family $\G$ and a permutation of $\omega$ such that $\G=\pi[\F]$. Then, $\{\pi(n),\pi(n+1)\}\notin\G$ for every $n<\omega$. 

We will now see that $\pi(n)$ and $\pi(n+1)$ is one the successor of the other, for every $n<\omega$. Indeed, if there are $n,k<\omega$ such that $\pi(n)<k<\pi(n+1)$, let $p<\omega$ be such that $\pi(p)=k$. Since $p\neq n, n+1$, then $\{n,p\}\in\F$ and $\{\pi(n),k\}\in\G$. By being $\G$ spreading, we have that $\{\pi(n),\pi(n+1)\}\in\G$, which is a contradiction. Analogously, we see that there are not $n,k<\omega$ such that $\pi(n+1)<k<\pi(n)$.

Let $m<\omega$ be such that $\pi(m)=1$. Then, $\pi(m+1)=2$ as it is the successor of $\pi(m)$. It follows that $\pi[\{m,m+1,m+2,...\}]=\omega$. Thus, $\pi$ is the identity map, contradicting that $\F$ is not spreading.
\end{example}

\subsection{Examples in the uncountable setting}

Our purpose now is to explore some examples of families  on a given uncountable cardinal $\kappa$ which satisfy the hypotheses of Theorem \ref{perm}. It is clear that $\F = [\kappa]^{\leq n}$ is such a family, but we consider this to be somewhat trivial, as the norm looks very much like the supremum norm and any permutation of $\kappa$ fixes $\F$. 

On the other hand, if we assume $\F$ on some uncountable cardinal $\kappa$ not to be contained in any $[\kappa]^{\leq n}$, then it cannot be spreading: let $(s_n)_n \subseteq \F$ with $|s_n| \geq n$ and fix $\alpha \in \kappa$ such  $s_n \subseteq \alpha$ for every $n \in \omega$. Spreading and hereditary would imply that the intervals $[\alpha, \alpha + n] \in \F$. In turn, this would contradict compactness, since this sequence converges to $[\alpha, \alpha + \omega)$, which is infinite and does not belong to $\F$.

\begin{example}\label{S_omega_kappa}
Given a cardinal $\kappa$, let $\mathcal{S}(\kappa)$ be the family formed by copies of the Schreier family $\mathcal{S}$ in each interval of the form $[\lambda, \lambda + \omega)$, where $\lambda <\kappa$ is a limit ordinal (or zero). Formally,
$$\mathcal{S}(\kappa) = \{s \in [\kappa]^{<\omega}: s \subseteq [\lambda , \lambda + \omega) \text{ for some limit ordinal $\lambda<\kappa$ and } |s| \leq \min \{n \in \omega: \lambda + n \in s \}+1\}.$$
\end{example}

Notice that $\mathcal{S}(\kappa)$ is a compact hereditary family such that $\{\alpha\} \in \overline{\mathcal{S}(\kappa)^{MAX}}$ for each $\alpha\in\kappa$, so that it satisfies the hypothesis of Theorem \ref{perm}. Moreover, $\mathcal{S}(\kappa)$ contains arbitrarily large (finite) elements. The corresponding combinatorial space $X_{\mathcal{S}(\kappa)}$ is isometric to an $\ell_\infty$-sum of $\kappa$ many copies of $X_\mathcal{S}$. 

To get an example which is a bit more involving we use trees. A tree is a partially ordered set $(T, \leq)$ such that for every $f \in T$, the set $\{g \in T: g \leq f\}$ is well-ordered. Hence, we can define the height $ht_T(f)$ of $f$ in $T$ as the only ordinal which is order-isomorphic to $\{g \in T: g \leq f\}$. The height $ht(T)$ of $T$ is the supremum of all $ht_T(f)$, $f \in T$. 
A chain in $T$ is a chain with respect to $\leq$, that is, a subset $C \subseteq T$ such that for every $f,g \in C$, either $f \leq g$ or $g \leq f$. A branch in $T$ is a maximal chain in $T$.

Finally, we have the following proposition, which is implicit in \cite[Lemma 2.32]{BrechLopezTodorcevic}:

\begin{proposition}\label{trees}
Let $\kappa$ be an infinite cardinal and let $T$ be a tree of height $\kappa$. If there is a compact and hereditary family $\mathcal{F}$ on $\kappa$ such that $[\kappa]^1 \subseteq \overline{\FM}$ , then there is such a family on $\lambda= |T|$.
\end{proposition}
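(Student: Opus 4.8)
The plan is to pull the family $\F$ back along the height function of the tree. I would identify the underlying set $\lambda$ with $T$ via a bijection (all the relevant properties are invariant under bijections of the index set) and write $h(f)=ht_T(f)$ for the height function $h\colon T\to\kappa$. Since $T$ has height $\kappa$ and $\kappa$, being an infinite cardinal, is a limit ordinal, for every $\beta<\kappa$ there is a node of height $>\beta$, whose predecessor at level $\beta$ exists; hence every level $L_\beta=\{f\in T: h(f)=\beta\}$ is nonempty for $\beta<\kappa$. I would then define
$$\G=\{s\in[T]^{<\omega}: h{\restriction}s \text{ is injective and } h[s]\in\F\}.$$
The guiding idea is that an admissible set should consist of nodes sitting at distinct levels whose set of levels is admissible for $\F$; the tree is used only as a convenient object of size $\lambda$ carrying a surjection onto $\kappa$ with nonempty fibres.

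First I would dispatch the easy requirements. As $[\kappa]^{\le1}\subseteq\F$, every singleton $\{f\}$ and $\emptyset$ lie in $\G$; and if $s\in\G$ and $s'\subseteq s$, then $h{\restriction}s'$ is injective and $h[s']\subseteq h[s]\in\F$, so $h[s']\in\F$ by heredity of $\F$, giving $s'\in\G$. For compactness I would use that a hereditary family is compact exactly when it contains no infinite set all of whose finite subsets belong to it. If $A\subseteq T$ were infinite with $[A]^{<\omega}\subseteq\G$, then for distinct $f,g\in A$ the pair $\{f,g\}\in\G$ forces $h(f)\ne h(g)$, so $h{\restriction}A$ is injective and $h[A]$ is infinite; moreover every finite $v\subseteq h[A]$ equals $h[t]$ for some finite $t\subseteq A$, and $t\in\G$ yields $v\in\F$. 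Thus $[h[A]]^{<\omega}\subseteq\F$ with $h[A]$ infinite, contradicting the compactness of $\F$.

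The main point is the topological condition $[\lambda]^1\subseteq\overline{\GM}$, and this is where the construction earns its keep. Fix $f\in T$ and set $\gamma=h(f)$; given a finite $F\subseteq T$, I must produce $s\in\GM$ with $f\in s$ and $s\cap F\subseteq\{f\}$. Since $\{\gamma\}\in\overline{\FM}$, applied to the finite set $h[F]\subseteq\kappa$, there is $u\in\FM$ with $\gamma\in u$ and $u\cap h[F]\subseteq\{\gamma\}$. For each $\beta\in u\setminus\{\gamma\}$ we have $\beta\notin h[F]$, hence $F\cap L_\beta=\emptyset$, so I may choose any node $g_\beta\in L_\beta$ and it will automatically satisfy $g_\beta\notin F$. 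Setting $s=\{f\}\cup\{g_\beta:\beta\in u\setminus\{\gamma\}\}$ gives $h[s]=u$ with $h{\restriction}s$ injective, so $s\in\G$, and $s\cap F\subseteq\{f\}$ by construction. Finally $s$ is maximal, since adding a node $g\notin s$ while staying in $\G$ would require $h(g)\notin u$ and $u\cup\{h(g)\}\in\F$, contradicting $u\in\FM$. Hence $s\in\GM$ and $\{f\}\in\overline{\GM}$.

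I expect the maximality transfer together with the freedom to avoid $F$ to be the delicate step, and it is precisely here that choosing nodes \emph{level by level} (rather than along a single branch) matters: a branch or chain realization would be blocked at leaves, where no node lies above $f$, so that maximal elements of $\F$ extending above $\gamma$ could not be realized; allowing one arbitrary node per selected level, which the nonemptiness of all levels below $\kappa$ guarantees, removes this obstruction. As a bonus, every $u\in\F$ is realized as some $h[s]\in\G$, so $\G$ inherits from $\F$ the property of containing arbitrarily large finite sets, which is what makes the transfer genuinely informative rather than the trivial $[\lambda]^{\le1}$.
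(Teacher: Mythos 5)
Your construction is correct, but it takes a genuinely different route from the paper's. The paper keeps the tree order: it defines $\G$ to be the collection of chains $C\subseteq T$ whose height-set $\{ht_T(f):f\in C\}$ lies in $\F$, and it verifies $[T]^1\subseteq\overline{\GM}$ by fixing a branch $C$ through the given node $f$ and realizing a sequence $(s_n)_n\subseteq\FM$ converging to $\{ht_T(f)\}$ as height-sets of chains $t_n\subseteq C$. You discard the order altogether and use the height map only as a surjection onto $\kappa$ with nonempty fibres, realizing a maximal $u\in\FM$ by choosing one node per level $\beta\in u$. Heredity, compactness and the transfer of maximality are handled by essentially the same observations in both arguments; the difference is the realization step, and there your version is the more robust one: a branch through $f$ need not contain nodes at every height occurring in the sets $s_n$ (if $f$ is a leaf of small height, every branch through $f$ is short, and the chain family can then fail to have any maximal element containing $f$ and avoiding a prescribed finite set), whereas your level-by-level choice needs only that every level $L_\beta$, $\beta<\kappa$, is nonempty, which you correctly derive from $ht(T)=\kappa$ and $\kappa$ being a limit ordinal. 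Your last paragraph identifies exactly this point. Indeed, your argument never uses the tree structure at all: it shows that a suitable family on $\kappa$ yields one on \emph{any} index set admitting a surjection onto $\kappa$, i.e.\ on every $\lambda\geq\kappa$. What the paper's chain construction buys instead is a structurally richer family, tied to the combinatorics of $T$ (it is the construction implicit in \cite[Lemma 2.32]{BrechLopezTodorcevic}, and it is what makes the resulting examples on $2^\kappa$ ``interesting''), at the price of working smoothly only for trees in which every node has extensions to all higher levels, such as the complete binary tree or disjoint copies of $\omega$.

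One small caveat: the paper defines $ht(T)$ as the supremum of the node heights, so this supremum may be attained and $T$ may contain nodes of height exactly $\kappa$; your $h\colon T\to\kappa$ silently excludes this. This is precisely why the paper first transfers $\F$ to the index set $\kappa+1$, using the same bijection-invariance of the hypotheses that you invoke when identifying $\lambda$ with $T$. The identical normalization repairs your write-up: if the top level is nonempty, replace $\F$ by its copy on $\kappa+1$ and run your argument with $\kappa+1$ in place of $\kappa$; all levels indexed by elements of $h[T]$ are then nonempty and nothing else changes.
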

\begin{proof}
First notice that the existence of a compact and hereditary family on some cardinal $\kappa$ is equivalent to the existence of a compact and hereditary family on some index set $I$ of cardinality $\kappa$, since being compact and hereditary doesn't depend on the order.

Hence, without loss of generality, we may assume that $\F$ is a compact and hereditary family on $\kappa+1$ (instead of $\kappa$) such that $[\kappa +1]^1 \subseteq \overline{\FM}$. 
We define
$$\G = \{ C \subseteq T: C \text{ is a chain  and } \{ht_T(f): f \in C\} \in \F\}.$$
 It is easy to see that $\G$ is a compact and hereditary family on $T$. Given $f \in T$, we fix a branch $C$ which contains $f$ and, given $(s_n)_n \subseteq \FM$ converging to $\{ht_T(f)\}$, it is easy to built $(t_n)_n$ converging to $\{f\}$, where each $t_n \subseteq C$ and $\{ht_T(g): g \in t_n\} = s_n$. Clearly each $t_n \in \GM$, which concludes the proof that $[T]^1 \subseteq \overline{\GM}$. From the first paragraph of the proof, it follows that the desired family on $\lambda=|T|$ exists.
\end{proof}

Example \ref{S_omega_kappa} can be seen as a particular case of a family given by Proposition \ref{trees}, taking $T$ to be $\kappa$-many incomparable copies of $\omega$. Also, if we have a family on $\kappa$, we get a family on $2^\kappa$, by taking $T$ the complete binary tree of height $\kappa$. This way we get more interesting families for every cardinal below the first inaccessible cardinal.

Motivated by Remark \ref{propinjective} and Proposition \ref{Sunicity}, we asked ourselves if there is a compact hereditary family $\mathcal{F}$ on $\omega_1$ such that $rk(\{\alpha\}) = \alpha$ for every $\alpha < \omega_1$. The following example answers this question positively:

\begin{example} 
Given $\alpha<\omega_1$, consider $\F_\alpha$ a compact and hereditary family on $\omega$ with $\rk(\F_\alpha)=\alpha+1$ (the existence of these families is guaranteed, for example, in \cite{ArgyrosStevo}).

Given a limit ordinal $\lambda<\omega_1$, and $s\in[\omega_1]^{<\omega}$ such that $s\subseteq [\lambda,\lambda+\omega)$, we will denote by $s-\lambda$ the set $\{\gamma-\lambda:\gamma\in s\}$ which belongs to $[\omega]^{<\omega}$. In a similar way as we have defined $\Sh(\kappa)$ in Example \ref{S_omega_kappa}, we now define the families $\F_\alpha(\kappa)$, for $\kappa$ a tail of $\omega_1$ and $\alpha<\omega_1$, by
$$\F_\alpha(\kappa)=\{s\in[\kappa]^{<\omega}:\text{ there is $\lambda<\kappa$ a limit ordinal or zero such that $s\subseteq [\lambda,\lambda+\omega)$ and $s-\lambda\in\F_\alpha$}\}.$$

Notice that $\F_\alpha(\kappa)$ is a compact and hereditary family on $\kappa$ with $\rk(\F_\alpha(\kappa))=\alpha+1$. Consider $$\F:=\Sh\cup\bigcup_{\omega\leq\alpha<\omega_1}\big\{\{\alpha\}\cup s:s\in\F_\alpha(\omega_1\setminus\alpha),\text{ and }\alpha<\gamma\text{ for every }\gamma\in s\big\}.$$

It is clear that $\F$ is a compact and hereditary family on $\omega_1$. Moreover, notice that $\rk_\F(\{\alpha\})=\rk_{\F_\alpha(\omega_1\setminus\alpha)}(\emptyset)=\rk(\F_\alpha(\omega_1\setminus\alpha))-1=\alpha$, for every $\omega\leq\alpha<\omega_1$.
\end{example}

\end{document}